\newtheorem{theorem}{Theorem}[section]
\newtheorem*{theorem*}{Theorem}
\newtheorem{prop}{Proposition}
\newtheorem*{prop*}{Proposition}
\newtheorem{conj}{Conjecture}
\newtheorem{lemma}[theorem]{Lemma}
\theoremstyle{definition}
\theoremstyle{remark}
\newtheorem*{remark*}{Remark}
\numberwithin{equation}{section}
\newcommand{\legendre}[2]{\ensuremath{\left( \frac{#1}{#2} \right) }}
\newcommand{\Q}{\mathbb{Q}}
\newcommand{\Z}{\mathbb{Z}}
\renewcommand{\O}{\mathcal{O}}
\begin{document}

\title[Biases for Quadratic Forms]{biases towards the zero residue class for quadratic forms in arithmetic progressions}

\author{Jeremy Schlitt}
\address{Department of mathematics and statistics, Concordia University, 1400 De Maisonneuve Blvd. W., Montr\'eal, Q{c} H3G1M8, CANADA}
\curraddr{D\'epartement de Math\'ematiques et de Statistique, Universit\'e de Montr\'eal, CP 6128 succ. centre, Centre Ville, Montr\'eal, Qc 3HJ3J7, CANADA}
\thanks{The author was supported by the Natural Science and Engineering Research Council via a Canada Graduate Scholarship (CGS-M)}

\subjclass[2020]{Primary 11N25; Secondary 11E25, 11N69, 11Y70}

\date{August 26, 2023 and, in revised form, \today.}

\begin{abstract}
We prove a bias towards the zero residue class in the distribution of the integers represented by binary quadratic forms. In most cases, we prove that the bias comes from a secondary term in an associated asymptotic expansion. This is unlike Chebyshev's bias, which exists somewhere at the level of $O(x^{1/2+\varepsilon}).$ We make a conjecture on the general situation which includes all cases not proven. Results on the distribution of the integers represented by a quadratic form--some of which are of independent interest--are proven along the way. The paper concludes with some numerical data that is illustrative of the bias. 
\end{abstract}

\maketitle

\section{Introduction}
Building upon a well known theorem of Landau \cite{landau1909einteilung}, Bernays \cite{bernays1912ueber}\footnote{See section 2 of \cite{moree2006two} for an excellent exposition on Landau and Bernay's theorem, due to Moree and Osburn. cf. \cite{Jamesquadforms,Pall1943TheDO}.} gave an asymptotic estimate on the number of positive integers not exceeding $x$ that are represented by a given binary quadratic form, henceforth called $B_f(x)$.
\begin{theorem*}[Bernays, 1912]
    $$B_f(x) \sim C(f)\frac{x}{\sqrt{\log x}} \quad (x \to \infty),$$
where $C(f)$ is a constant depending only on $f$.
\end{theorem*}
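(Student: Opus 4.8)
The target shape $x/\sqrt{\log x}$ is the signature of a Dirichlet series with a square-root singularity at $s=1$, so my plan is to attack $B_f(x)$ by the Landau--Selberg--Delange method rather than by elementary sieving. Write $D=\operatorname{disc}(f)<0$ and let $b_f(n)$ be the indicator of the integers represented by $f$; I would study the generating series $\Phi_f(s)=\sum_{n\ge 1} b_f(n)\, n^{-s}$, aiming to show that near $s=1$ one has $\Phi_f(s)=g_f(s)(s-1)^{-1/2}$ with $g_f$ holomorphic and nonzero in a neighbourhood of $s=1$, and then to feed this into a Tauberian theorem (Landau's, or the Selberg--Delange machinery with exponent $z=\tfrac12$) to obtain $B_f(x)\sim g_f(1)\,\Gamma(\tfrac12)^{-1}\,x/\sqrt{\log x}$, reading off $C(f)$ from $g_f(1)$.

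The first point to get straight is that one must count the \emph{support} of the representation function, not the representation function itself. Summing the number of representations $r_f(n)$ over $n\le x$ is a lattice-point count and grows linearly in $x$; equivalently, via the ideal correspondence, the partial Dedekind zeta function $\sum_{\mathfrak a\in C} N\mathfrak a^{-s}$ attached to the ideal class $C$ corresponding to $f$ has a simple pole at $s=1$. Thus the typical represented integer carries about $\sqrt{\log x}$ representations, which is exactly why $B_f(x)=o(x)$. So the heart of the matter is the indicator $b_f$, and here the model case is Landau's: an integer is represented by \emph{some} form of discriminant $D$ essentially when every prime inert in $\Q(\sqrt D)$ occurs to an even power. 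That set is multiplicatively defined, its Dirichlet series has an Euler product, and taking logarithms gives $\tfrac12\big(\log\zeta(s)+\log L(s,\chi_D)\big)+(\text{holomorphic near }s=1)$; since the split primes have density $\tfrac12$ and $L(1,\chi_D)\neq 0$, this produces precisely the $(s-1)^{-1/2}$ behaviour.

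To descend from ``represented by some form'' to ``represented by the single form $f$'', I would decompose along the characters $\psi$ of the (ring) class group of $\O_D$, replacing $\zeta_K$ by the Hecke $L$-functions $L(s,\psi)$. Because $L(s,\psi)$ is holomorphic and nonvanishing at $s=1$ for every $\psi\neq\psi_0$, only the principal character contributes to the order of the singularity: the non-principal characters merely adjust the holomorphic prefactor $g_f$. This is the mechanism that makes the exponent of $\log x$ universal while letting the constant $C(f)$ genuinely depend on $f$ (and, within a genus, distinguish inequivalent forms). Non-coprimality to $D$ and the passage between primitive and imprimitive representations are handled by finitely many local correction factors absorbed into $g_f$.

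The main obstacle is that, unlike the full-discriminant set, the indicator of the integers represented by a \emph{single} class is not multiplicative, so there is no honest Euler product to differentiate. What one really has to count is the integers whose prime-ideal factorizations multiply out to the prescribed class $C$ --- a multiplicatively constrained problem that is only linearized after the character sum over the class group. The technical crux, therefore, is twofold: proving that this character-twisted series still extends past $\Re s=1$ with exactly a $(s-1)^{-1/2}$ singularity and no spurious zeros or poles on the line, and then executing the Tauberian/Selberg--Delange step with enough uniformity in the error to isolate the leading term (and, as the abstract promises, the secondary terms responsible for the bias). Everything upstream is structural bookkeeping; this analytic-continuation-plus-Tauberian core is where the real work lies.
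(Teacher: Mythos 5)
The statement you are proving is quoted in the paper only as classical background (Landau--Bernays); the closest the paper comes to a proof is \Cref{equidgeneralemma} combined with Fomenko's \Cref{Fomthm}, and comparing your plan with that route exposes a genuine gap at exactly the place you call the ``technical crux.'' Decomposing along \emph{all} characters $\psi$ of the class group linearizes the wrong quantity: orthogonality gives
$$\frac{1}{h}\sum_{\psi}\overline{\psi}(C)\sum_{N\mathfrak{a}=n}\psi(\mathfrak{a})\;=\;\#\{\mathfrak{a}\,:\,N\mathfrak{a}=n,\ [\mathfrak{a}]=C\},$$
i.e.\ the representation count $r_f(n)$ (up to units), not the indicator $b_f(n)$. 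The inner sums are the coefficients of the Hecke $L$-functions $L(s,\psi)$, so the resulting Dirichlet series is a finite combination of Hecke $L$-functions with a simple pole at $s=1$ coming from $\psi_0$, and the Tauberian step returns $\sim cx$ --- precisely the lattice-point count you correctly flagged in your second paragraph as the thing to avoid. There is no way to extract $\min(1,r_f(n))$ from a linear character sum over the class group, and for a non-real $\psi$ the value $\psi(\mathfrak{a})$ is not even a well-defined function of $n=N\mathfrak{a}$ (distinct ideals of the same norm can lie in different classes), so the multiplicative ``twisted indicator'' you need does not exist at that level of generality.

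What actually works, and what the paper does, is to restrict to the \emph{genus} characters: these are exactly the real characters, they are given by Legendre symbols evaluated at $N(\mathfrak{p})$ and hence depend only on the norm, so $b(n,\psi):=\psi(\mathfrak{a})$ for any $\mathfrak{a}$ of norm $n$ is a well-defined, bounded, multiplicative function of $n$ with $b(n,\psi_0)$ equal to the indicator of being a norm. Then $F(s,\psi_0)^2=\zeta(s)L_D(s)A(s)$ and Selberg--Delange yields $B_R(x)\sim a_0\,x/\sqrt{\log x}$ for the count attached to a whole genus $R$. The descent from the genus to the single form $f$ is \emph{not} an analytic-continuation or Tauberian issue at all: it is the assertion that the integers represented by some form in $R$ but not by $f$ itself number $O(x/(\log x)^{2/3})$ (\Cref{Fomthm}, refined by Golubeva's \Cref{goluthm}), which requires a separate combinatorial argument about exceptional integers. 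A related point to correct: since $a_0$ is independent of the genus and the exceptional integers are of lower order, the constant $C(f)$ in Bernays' theorem depends only on the discriminant $D$, not on the individual form; the dependence on $f$ surfaces only in secondary terms, which is exactly the phenomenon this paper studies.
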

Above, and below, $f$ is always a positive-definite integral binary quadratic form of negative fundamental discriminant $D$. In the present article, we are concerned with the distribution in arithmetic progressions of integers represented by a given binary quadratic form. Let $B_f(x;q,a)$ be the number of integers not exceeding $x$ that are congruent to $a$ modulo $q$, and represented by the form $f$. As a first result, we have an extension of Bernay's Theorem to arithmetic progressions.

\begin{theorem}
\label{thm1}
Let $q$ be a positive integer such that $(q,2D)=1$. Then for all $a$ satisfying $(a,q) = 1$:
$$B_f(x;q,a) = \frac{1}{q}\prod_{\substack{p|q\\\legendre{D}{p} = -1} } \left(1+\frac{1}{p} \right) B_f(x) + \mathcal{O}\left( \frac{x}{(\log x)^{2/3}} \right).$$
In particular, when $q$ is a prime number, one has
$$B_f(x;q,a) = c(q,a) B_f(x) + \mathcal{O}\left( \frac{x}{(\log x)^{2/3}} \right),$$
where
$$c(q,a) := \begin{cases} \frac{1}{q} \quad & \textit{ if } \legendre{D}{q} = 1\\
\frac{q+1}{q^2} \quad & \textit{ if } \legendre{D}{q} = -1, a \not \equiv 0 \bmod q\\
\frac{1}{q^2} \quad & \textit{ if } \legendre{D}{q} = -1, a \equiv 0 \bmod q
\end{cases}$$
\end{theorem}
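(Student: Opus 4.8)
The plan is to detect the condition $n \equiv a \pmod q$ with Dirichlet characters modulo $q$ and then to run, on each resulting twisted sum, the same Selberg--Delange/Landau--Bernays Tauberian machinery that underlies Bernays' theorem. Since $(a,q)=1$, every integer counted by $B_f(x;q,a)$ is coprime to $q$, so orthogonality gives
\[
B_f(x;q,a) = \frac{1}{\phi(q)}\sum_{\chi \bmod q}\overline{\chi}(a)\, S_\chi(x), \qquad S_\chi(x) := \sum_{\substack{n \le x,\ (n,q)=1 \\ f \text{ represents } n}} \chi(n).
\]
I expect the principal character $\chi_0$ to supply the main term and the non-principal characters to be absorbed into the error. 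Everything is controlled by the Dirichlet series $D_\chi(s) = \sum_{(n,q)=1,\ f \text{ rep } n}\chi(n)\, n^{-s}$, whose behaviour near $s=1$ I would read off from the class-group-character (Hecke $L$-function) factorisation already used to prove Bernays' theorem, in which the untwisted series behaves like $g(s)(s-1)^{-1/2}$ with $g$ holomorphic and non-vanishing at $s=1$.

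For $\chi_0$ I would compute the leading constant by comparing local factors. Restricting to $(n,q)=1$ deletes the $p$-divisible part at each $p\mid q$; because $(q,2D)=1$ these primes are unramified, so the resulting relative density is the local quantity $1-1/p$ when $\legendre{D}{p}=1$ (all powers of $p$ occur) and $1-1/p^2$ when $\legendre{D}{p}=-1$ (only even powers occur), and this factor is independent of the ideal class of $f$ since it only reflects the splitting of $p$. Hence $S_{\chi_0}(x) = c_0 B_f(x) + (\text{error})$ with $c_0 = \prod_{p\mid q,\ \legendre{D}{p}=1}(1-1/p)\prod_{p\mid q,\ \legendre{D}{p}=-1}(1-1/p^2)$. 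Using $\phi(q)/q = \prod_{p\mid q}(1-1/p)$, the factors with $\legendre{D}{p}=1$ cancel and $c_0/\phi(q)$ collapses to exactly $\frac1q\prod_{p\mid q,\ \legendre{D}{p}=-1}(1+1/p)$, which is the claimed constant — and it visibly does not see $f$.

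The heart of the matter, and the step I expect to be the main obstacle, is showing that $S_\chi(x)$ is of strictly lower order for every non-principal $\chi$. Here I would exploit that $(q,2D)=1$ makes $\chi$ independent of the quadratic character $\chi_D=\legendre{D}{\cdot}$ governing the splitting. Schematically,
\[
\log D_\chi(s) = \sum_{\legendre{D}{p}=1}\frac{\chi(p)}{p^s} + \mathcal{O}(1) = \frac12\sum_p \frac{\chi(p)}{p^s} + \frac12\sum_p\frac{\chi_D(p)\chi(p)}{p^s} + \mathcal{O}(1),
\]
and since $\chi$ and $\chi_D\chi$ are both non-principal Dirichlet characters (of conductors dividing $q$ and $|D|q$), the non-vanishing of $L(1,\chi)$ and $L(1,\chi_D\chi)$ keeps both prime sums bounded as $s\to 1^+$. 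Thus $D_\chi(s)$ is holomorphic and non-vanishing at $s=1$: the twist destroys the $(s-1)^{-1/2}$ singularity, and Selberg--Delange with exponent $0$ gives $S_\chi(x) = \mathcal{O}\!\left(x/(\log x)^{A}\right)$ for every $A$, within the classical zero-free region. The genuine difficulty in making this rigorous is twofold: first, legitimising the local manipulations for a \emph{single} form by passing through the class-group decomposition of the representation count, so that each $D_\chi$ is honestly a product of controlled $L$-functions and only the totally principal combination retains a singularity at $s=1$; and second, pushing the analytic continuation and zero-free regions of all the $L(s,\chi)$ and $L(s,\chi_D\chi)$ far enough left to justify the Tauberian step. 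Assembling the $\chi_0$ term with these bounds yields the stated formula, the error $\mathcal{O}(x/(\log x)^{2/3})$ coming from the secondary behaviour of the principal part rather than from the character sums.

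Finally, the prime-modulus refinement including $a\equiv 0 \pmod q$ falls out of the same local analysis: the class $0$ collects exactly the represented $n$ divisible by $q$, whose relative density is the complementary local factor $1-(1-1/q)=1/q$ when $\legendre{D}{q}=1$ and $1-(1-1/q^2)=1/q^2$ when $\legendre{D}{q}=-1$, matching the displayed $c(q,a)$ and making the three cases sum to $1$ across residue classes.
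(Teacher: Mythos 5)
There is a genuine gap, and it sits exactly where you flagged ``the main obstacle'' but then treated as a technicality: the passage from the genus (or ideal-class) level to the single form $f$. Your Dirichlet series $D_\chi(s)=\sum_{f\ \mathrm{rep}\ n}\chi(n)n^{-s}$ has no Euler product, because the indicator of representability by one fixed form is not multiplicative when the class group has elements of order $>2$ (e.g.\ if $p_1$ is represented by the class $A$ and $p_2$ by $B$, then $p_1p_2$ is represented by $A^{\pm1}B^{\pm1}$ but not necessarily by $[f]$). Consequently the step $\log D_\chi(s)=\sum_{\legendre{D}{p}=1}\chi(p)p^{-s}+\mathcal{O}(1)$ is not legitimate, and the Selberg--Delange machinery cannot be run on $D_\chi$ directly. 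What \emph{is} multiplicative, and what the paper actually works with, is the function $b(n,\psi)$ attached to the real (genus) characters of $C(D)$, which detects representability by \emph{some} form in the genus $R$ of $f$; the analysis you sketch (only the totally principal pair $(\psi_0,\chi_0)$ is singular at $s=1$, the local factor $C_q(1)=\prod_{p\mid q}(1-1/p)\prod_{p\mid q,\legendre{D}{p}=-1}(1+1/p)$, the collapse to $\frac1q\prod(1+1/p)$ after dividing by $\phi(q)$) is correct at that level and reproduces the paper's Theorem~2.1 for $B_R(x;q,a)$.

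The missing ingredient is the quantitative comparison $B_f(x;q,a)=B_R(x;q,a)+\mathcal{O}\bigl(x/(\log x)^{2/3}\bigr)$, which the paper imports as Fomenko's theorem and which is a substantial result about the sparsity of integers represented by the genus of $f$ but not by $f$ itself. This is also why your closing claim that the error $\mathcal{O}\bigl(x/(\log x)^{2/3}\bigr)$ ``comes from the secondary behaviour of the principal part'' cannot be right: Selberg--Delange applied to the genus-level series yields an error of size $x/(\log x)^{3/2}$ after the main term, a full power of $\log$ better; the exponent $2/3$ is precisely the cost of the genus-to-form reduction. Without supplying that reduction (or an equivalent bound on the exceptional integers), your argument proves the theorem only for $B_R(x;q,a)$, i.e.\ only in the special case where each genus contains a single class.
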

\Cref{thm1} tells us what we should expect as the proportion of integers that lie in each non-zero residue class, and when $q$ is prime, this implies the proportion for the zero residue class. However, a look at some of the tables of \Cref{numericaldata} suggests an interesting discrepancy between \Cref{thm1} and actual data. It appears that there is a numerical bias towards the $0$ residue class in all examples. For example, in cases where $\legendre{D}{q} = 1$, we expect a proportion of $1/q$ in each class, and yet we observe that the $0$ class contains more integers than the non-zero classes in an apparent way. We give in \Cref{thm2,thm3} a theoretical explanation for this bias in different cases by computing a secondary term for $B_f(x;q,a)$. Hereinafter, $C(D)$ denotes the group of reduced forms of discriminant $D$ with $h=h(D) := |C(D)|$, and $G(D)$ denotes the genus group, i.e. $G(D) \cong C(D)/C(D)^2$. In the case where $C(D) \cong G(D)$, we explicitly compute the secondary term in the asymptotic expansion of $B_f(x;q,a)$, which accounts for the bias: 
\begin{theorem}
    \label{thm2}
    Let $D$ be a discriminant such that $C(D) \cong G(D)$, and let $f \in C(D)$ be a given form. Let $q$ be a prime modulus for which ${(q,2D) = 1}$. One has
    $$B_f(x;q,a) = c(q,a) \left( a_0 \frac{x}{(\log x)^{1/2}} + a_1\big(1 - \frac{a_0}{a_1} \delta(a,q) \big)\frac{x}{(\log x)^{3/2}}\right) + O\left(\frac{x}{(\log x)^{5/2}}\right) ,$$
    where $c(q,a)$ is as in \Cref{thm1}, $a_0$ and $a_1$ are defined by \eqref{EQ BR L1}, and
    $$\delta(q,a) = \begin{cases}
        \frac{\log q}{2(q-1)}, \quad & \textit{ if } \legendre{D}{q} = 1,\quad a \not \equiv 0 \bmod q\\
        - \frac{\log q}{2}, \quad & \textit{ if } \legendre{D}{q} = 1,\quad a \equiv 0 \bmod q\\
        \frac{\log q}{q-1}, \quad & \textit{ if } \legendre{D}{q} = -1,\quad a \not \equiv 0 \bmod q\\
        - \log q, \quad & \textit{ if } \legendre{D}{q} = -1,\quad a \equiv 0 \bmod q.
    \end{cases}$$
\end{theorem}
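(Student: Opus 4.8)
The plan is to reduce $B_f(x;q,a)$ to the Selberg--Delange analysis of a single multiplicative function and to read off the secondary term, together with its dependence on $q$ and $a$, from the Euler factor at $q$. Because $C(D)\cong G(D)$ the class group is an elementary abelian $2$-group, so every character of $C(D)$ is a genus character, attached to a factorization $D=D_1D_2$ into coprime fundamental discriminants; moreover inversion acts trivially on $G(D)$, so the genus of an ideal of norm $n$ is well defined and each representable integer is represented by exactly one of the $h$ forms. Writing $b(n)=\mathbbm{1}[n\text{ is represented by some form of discriminant }D]$ and $b_f(n)=\mathbbm{1}[f\text{ represents }n]$, I first record that $b$ is multiplicative with
$$\sum_{n\ge 1}b(n)n^{-s}=\prod_{\legendre{D}{p}=1}(1-p^{-s})^{-1}\prod_{\legendre{D}{p}=-1}(1-p^{-2s})^{-1}\prod_{p\mid D}(1-p^{-s})^{-1},$$
and that $b_f=\tfrac1h\sum_{\psi}\overline{\psi(f)}\,\beta_\psi$, where $\beta_\psi(n)=\psi(\operatorname{gen}(n))b(n)$ is multiplicative and $\sum_n\beta_\psi(n)n^{-s}=(L(s,\chi_{D_1})L(s,\chi_{D_2}))^{1/2}H_\psi(s)$ with $H_\psi$ holomorphic and non-vanishing for $\Re s>\tfrac12$. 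For the principal genus character this equals $(\zeta(s)L(s,\chi_D))^{1/2}$ times a harmless factor, carrying a branch point of type $(s-1)^{-1/2}$ at $s=1$; for every non-principal $\psi$ both $L$-factors are non-zero at $s=1$, so the series is holomorphic and non-vanishing on the entire line $\Re s=1$.

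Next I detect the progression. For $(a,q)=1$, orthogonality of Dirichlet characters modulo $q$ gives $B_f(x;q,a)=\tfrac1{q-1}\sum_{\chi\bmod q}\overline{\chi(a)}\sum_{n\le x}b_f(n)\chi(n)$, while the class $a\equiv 0$ is handled through $B_f(x;q,0)=B_f(x)-\sum_{n\le x,\,(n,q)=1}b_f(n)$. Twisting $\beta_\psi$ by $\chi$ replaces $L(s,\chi_{D_i})$ by $L(s,\chi\chi_{D_i})$, and since $(q,2D)=1$ the product $L(s,\chi\chi_{D_1})L(s,\chi\chi_{D_2})$ has a pole at $s=1$ only for the single pair $(\psi,\chi)=(\psi_0,\chi_0)$; every other pair yields a Dirichlet series holomorphic on $\Re s=1$, whose summatory function is $O(x(\log x)^{-A})$ for all $A$ by the $\alpha=0$ case of the Selberg--Delange method. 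Consequently, to the required precision,
$$B_f(x;q,a)=\frac1{q-1}\sum_{\substack{n\le x\\(n,q)=1}}b_f(n)+O\!\left(\frac{x}{(\log x)^{5/2}}\right)\qquad ((a,q)=1),$$
and the Dirichlet series of $\sum_{(n,q)=1}b_f(n)$ agrees, near $s=1$, with $\tfrac1h(s-1)^{-1/2}G(s)\,W_q(s)$, where $\tfrac1h(s-1)^{-1/2}G(s)$ is the series for $b_f$ and $W_q(s)=1-q^{-s}$ or $1-q^{-2s}$ according as $\legendre{D}{q}=1$ or $-1$ (the reciprocal of the $q$-Euler factor of $b$).

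I then run the Selberg--Delange / Landau contour method: continue $G$ into a standard zero-free region of $\zeta$ and $L(\cdot,\chi_D)$ and integrate the relevant series against $x^s/s$ along a Hankel contour around $s=1$, producing two-term asymptotics with error $O(x(\log x)^{-5/2})$. Expressing the coefficients of $x(\log x)^{-1/2}$ and $x(\log x)^{-3/2}$ through $G(1)$ and $G'(1)$ recovers $a_0,a_1$ from \eqref{EQ BR L1} for $B_f(x)$ itself. For the coprime sum the extra factor $W_q$ multiplies the leading coefficient by $W_q(1)$, reproducing $c(q,a)=W_q(1)/(q-1)$ for $(a,q)=1$, and, after using $\Gamma(\tfrac12)/\Gamma(-\tfrac12)=-\tfrac12$ to relate the two Selberg--Delange coefficients, contributes to the secondary coefficient exactly an additional term proportional to $a_0$ and governed by the logarithmic derivative of the $q$-factor at $s=1$. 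Treating $a\equiv 0$ through the complementary factor $1-W_q(s)$, one obtains in all four cases the shape $c(q,a)\big(a_1-a_0\delta(q,a)\big)$, with
$$\delta(q,a)=\tfrac12\,\frac{d}{ds}\log V_{q,a}(s)\Big|_{s=1},\qquad V_{q,a}(s)=\begin{cases}W_q(s), & (a,q)=1,\\ 1-W_q(s), & a\equiv 0\bmod q,\end{cases}$$
each a rational multiple of $\log q$, matching the listed values of $\delta(q,a)$.

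The main obstacle is the analytic input for the contour step at this precision. One must (i) establish the holomorphic continuation of $(s-1)^{1/2}\sum_n b(n)n^{-s}$ and of each non-principal twisted series into a zero-free region $\Re s\ge 1-c/\log(2+|t|)$ with polynomial growth, so that the Hankel-contour truncation delivers the clean saving $O(x(\log x)^{-5/2})$ rather than a weaker bound, and (ii) verify that every non-principal genus-or-Dirichlet pair is genuinely non-vanishing on the whole line $\Re s=1$, so that its contribution falls below every power of $\log x$. Since $q$ is fixed, no uniformity in $q$ is needed and all implied constants may depend on $q$, $D$ and $f$; once the continuation is secured, the remaining work---differentiating $V_{q,a}$ and assembling the four cases---is bookkeeping that produces $\delta(q,a)$.
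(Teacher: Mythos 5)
Your proposal is correct and follows essentially the same route as the paper: since $C(D)\cong G(D)$ puts one class per genus, one expands the indicator over genus characters times Dirichlet characters mod $q$, applies the Selberg--Delange method so that only the pair $(\psi_0,\chi_0)$ survives, and reads the secondary coefficient off the logarithmic derivative at $s=1$ of the Euler factor at $q$ (your $W_q(s)$, resp.\ $1-W_q(s)$ for $a\equiv 0$, is exactly the paper's $C_q(s)$, resp.\ $1-C_q(s)$, when $q$ is prime). One caveat: in the inert case with $a\not\equiv 0\bmod q$ your formula $\tfrac12\frac{d}{ds}\log(1-q^{-2s})\big|_{s=1}$ gives $\delta=\log q/(q^2-1)$ --- in agreement with the paper's own \eqref{constant b1 lemma 1} --- rather than the $\log q/(q-1)$ printed in the theorem, so your closing claim that all four listed values are matched is not literally true; the printed entry appears to be a typo in the statement rather than an error in your argument.
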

 \Cref{thm2} tells us that the secondary term of $B_f(x;q,a)$ is generally larger when $a\equiv 0 \bmod q$, as opposed to when $a \not\equiv 0 \bmod q$, explaining the numerical bias. 

Let us write $1_R(n)$ to be the function that equals $1$ if $n$ is represented by a form in the genus $R \in G(D)$, and $1_R(n)=0$ otherwise. Let us also define
$$B_R(x;q,a) := \sum_{\substack{n \leq x\\n \equiv a \bmod q}}1_R(n).$$ The proofs of \Cref{thm1,thm2} both begin with a computation of $B_R(x;q,a)$. Instrumental in the final step of the proof of \Cref{thm1} is the following strong result of Fomenko (\cite{fomenko_1998}):
\begin{theorem}[Fomenko, 1998]
\label{Fomthm}
Let $f$ be a form in the genus $R \in G(D)$. We have, for all $a,q$,
$$B_f(x;q,a) = B_R(x;q,a) + \O \left( \frac{x}{(\log x)^{2/3}} \right) \quad (x \to \infty).$$
\end{theorem}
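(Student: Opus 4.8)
The plan is to reduce the statement, uniformly in $a$ and $q$, to the single unconditional estimate $B_R(x)-B_f(x)=\O(x/(\log x)^{2/3})$, and then to bound that difference through genus theory and the correspondence between forms and ideal classes. Write $1_f(n)=1$ when the single form $f$ represents $n$ and $1_f(n)=0$ otherwise, so that $B_f(x;q,a)=\sum_{n\le x,\,n\equiv a}1_f(n)$.

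The decisive first observation is the pointwise inequality $1_f(n)\le 1_R(n)$: if $f$ represents $n$, then the genus $R\ni f$ represents $n$, whence $1_R(n)-1_f(n)\ge 0$ for every $n$. Consequently, for all $q$ and $a$,
$$0\le B_R(x;q,a)-B_f(x;q,a)=\sum_{\substack{n\le x\\ n\equiv a \bmod q}}\left(1_R(n)-1_f(n)\right)\le \sum_{n\le x}\left(1_R(n)-1_f(n)\right)=B_R(x)-B_f(x),$$
because each summand is non-negative, so restricting to a single residue class only shrinks the sum. Thus it suffices to treat $q=1$, and the resulting bound is automatically uniform in $a$ and $q$; this is precisely why the error term in the statement is free of any dependence on the modulus.

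To estimate $B_R(x)-B_f(x)$ I would pass to $K=\Q(\sqrt{D})$. Under the dictionary between classes of forms of discriminant $D$ and ideal classes, an integer $n$ coprime to $D$ is represented by $f$ exactly when $n$ is the norm of an integral ideal in the class $C\in C(D)$ attached to $f$, and is represented by the genus $R$ exactly when some ideal of norm $n$ lies in a class of the coset $R\in C(D)/C(D)^2$. Restricting to the dominant family -- squarefree $n$ coprime to $D$ all of whose prime factors split, the remaining $n$ being controlled by Landau's count -- one factors $n=p_1\cdots p_k$ and finds that the classes representing $n$ are exactly $\{[\mathfrak p_1]^{\varepsilon_1}\cdots[\mathfrak p_k]^{\varepsilon_k}:\varepsilon_i=\pm1\}$. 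Every such product has the same image $R$ in $G(D)$, so $n$ is represented by the whole genus as soon as $\sum_i\overline{[\mathfrak p_i]}=R$, whereas $n$ is represented by $f$ only if the single class $C$ occurs among these $2^{k}$ sign choices. Writing each product as $(\prod_i[\mathfrak p_i])\cdot\prod_{i\in S}[\mathfrak p_i]^{-2}$ exhibits this set as a translate of the subset-products of the squares $[\mathfrak p_i]^2$, which lie in the principal genus $C(D)^2$; once $k$ exceeds a bound depending only on $D$ and the prime-ideal classes are generic, these subset-products fill all of $C(D)^2$, the full coset $R$ is covered, and $C$ is necessarily hit.

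Hence $B_R(x)-B_f(x)$ is supported on representable $n$ whose prime factorization is short or whose prime-ideal classes are degenerate. I would estimate this exceptional set by the Landau--Selberg--Delange machinery: the number of representable $n\le x$ with the number of distinct prime factors $\omega(n)$ below a fixed threshold, or with a non-generic configuration of split primes, is $\O\left(x(\log\log x)^{K}/\log x\right)$ for some constant $K=K(D)$, which is comfortably within the claimed $\O(x/(\log x)^{2/3})$ against the main term of size $x/(\log x)^{1/2}$. The main obstacle is exactly this last passage: unlike $1_R$, which is cut out by character and congruence conditions and therefore has a clean Dirichlet series with an $(s-1)^{-1/2}$-type singularity, the indicator $1_f$ is not multiplicative, so one cannot read off its analytic behaviour directly. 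Making the covering argument effective -- quantifying how rarely the distinguished class $C$ is missed by the products $\{\prod_i[\mathfrak p_i]^{\pm1}\}$ and confirming that the exceptional integers are genuinely this sparse -- is the technical heart of the proof.
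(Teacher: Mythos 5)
First, a point of comparison: the paper does not prove \Cref{Fomthm} at all --- it is quoted verbatim from Fomenko's 1998 paper and used as a black box. So your proposal cannot be measured against an internal argument; it has to stand on its own as a proof of Fomenko's theorem, and as it stands it does not.

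What is genuinely good is your opening reduction. The pointwise inequality $1_f(n)\le 1_R(n)$ does give $0\le B_R(x;q,a)-B_f(x;q,a)\le B_R(x)-B_f(x)$ by positivity, so the statement for all $a,q$ follows from the case $q=1$, uniformly. This is correct and is exactly why the error term can be independent of the modulus. The problems are in the second half. (i) You restrict to squarefree $n$ coprime to $D$ with all prime factors split and declare ``the remaining $n$'' controlled by Landau's count; but the non-squarefree representable integers (and those divisible by ramified primes, or by even powers of inert primes) have density of order $x/\sqrt{\log x}$, the same as the main term, so they cannot be discarded. What saves the day is that the \emph{difference} $1_R(n)-1_f(n)$ can be transferred to squarefree kernels --- essentially the paper's \Cref{lemma sf 1e}, $E_f(ms^2)=1\Rightarrow E_f(m)=1$ --- but you neither state nor use such a reduction, and the paper itself points out (in its Section 3.2) that this passage is delicate. (ii) The covering argument --- that for ``generic'' configurations the subset-products of the $[\mathfrak p_i]^{\pm 2}$ exhaust $C(D)^2$ and hence hit the class of $f$ --- is the correct mechanism, but the entire content of Fomenko's (and Golubeva's) theorem is the quantitative classification and counting of the non-generic integers. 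That count is not a routine application of Landau--Selberg--Delange on a set of integers with $\omega(n)$ bounded: as \Cref{goluthm} shows, the exceptional integers are things like $mp_1\cdots p_r$ with the prime factors of $m$ confined to a proper subgroup $H$ and finitely many primes in prescribed classes outside $H$, with density $\asymp x(\log\log x)^r/(\log x)^{1-|H|/(2h)}$; one must identify all such configurations and verify that every one of them has exponent at least $2/3$. You explicitly defer this (``the technical heart''), so the proposal is an outline of the right strategy with the decisive step missing, not a proof.
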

 \Cref{Fomthm} tells us that almost all integers represented by \textit{some} form in the genus $R$ are actually represented by \textit{all} forms in that genus. Turning now to the case when there is a single genus of forms, i.e. $G(D) \cong \{1\}$, we prove a theorem for squarefree integers coprime to $2D$, which explains the numerical bias towards the zero residue class modulo $q$ in this setting. Rather astonishingly, though the secondary term in this case is of a different nature than the secondary term of \Cref{thm2}, it still espouses a bias towards the zero residue class.

\begin{theorem}
\label{thm3}
Let $D$ be a discriminant for which $C(D)$ is cyclic of odd order $h$. Let $[f^\star]$ be a generator of $C(D)$, and $H = \langle [f^\star]^{p_0} \rangle$, where $p_0$ is the smallest prime divisor of $h$. Let $q$ be an integer such that $(q,2D) = 1$, and let $B'_f(x;q,a)$ deenote the number of squarefree integers not exceeding $x$ that are coprime to $2D$, are represented by the form $f$, and are congruent to $a \bmod q$. 
Then, one has, for every $a$ such that $(a,q) = 1$,
$$B'_f(x;q,a) \sim A_1(f,q,a) \frac{x}{(\log x)^{1/2}},$$
for some constant $A_1(f,q,a)$, and 
$$B'_f(x;q,a)  =A_1(f,q,a) \frac{x}{(\log x)^{1/2}} - N'_f(x;q,a),$$
where 
$$N'_f(x;q,a) = c'(q,a) A_2(f) \frac{x}{(\log x)^{1-1/(2p_0)}}(\log \log x)^r (1+o(1)),$$
$A_2(f)$ is a positive constant depending only on $f$, 
$$r = \begin{cases} p_0-2, \quad & [f]\in H,\\ p_0-3, \quad & \textit{otherwise,} \end{cases}$$ 
$$c'(q,a) = \frac{1}{\phi(q)}\prod_{p|q}\left(1+\frac{\nu_H(p)}{p} \right)^{-1},$$
and $\nu_H(p)$ is defined to be $1$ if $p$ is represented by a class of forms in $H$, and $\nu_H(p) = 0$ otherwise. In particular, if $q$ is taken to be prime, then
$$c'(q,a) = 
\begin{cases} 
\frac{1}{q+1}, \quad & a \equiv 0 \bmod q \textit{ and }\nu_H(q) = 1\\

\frac{q}{q^2-1}, \quad &\textit{if } a \not\equiv 0 \bmod q \textit{ and }\nu_H (q)= 1 \\

o(1), \quad & a \equiv 0 \bmod q \textit{ and }\nu_H(q) = 0\\

\frac{1}{q-1}, \quad &\textit{if } a \not\equiv 0 \bmod q \textit{ and }\nu_H(q) = 0.
\end{cases}$$
\end{theorem}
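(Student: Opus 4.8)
The plan is to reduce the representability condition to a purely group-theoretic statement about the class group $C(D)$, and then to extract the asymptotics by the Landau--Selberg--Delange method applied to products of Hecke and Dirichlet $L$-functions.

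First I would record the structural fact that drives everything. Since $h$ is odd, squaring is an automorphism of the cyclic group $C(D)$. For a squarefree $n$ coprime to $2D$ all of whose prime factors $p$ satisfy $\legendre{D}{p}=1$, writing $c_p\in C(D)$ for the class of a prime ideal above $p$, the integer $n$ is represented by $f$ if and only if $[f]$ lies in the sign-sumset $\{\prod_{p\mid n}c_p^{\varepsilon_p}:\varepsilon_p=\pm1\}$; all other $n$ (non-squarefree, not coprime to $2D$, or having an inert prime factor) contribute nothing. This sumset is contained in the coset $(\prod_{p\mid n}c_p)\langle c_p^2:p\mid n\rangle=\langle c_p:p\mid n\rangle$, where the last equality uses that squaring is bijective. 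For $n$ with many prime factors of varied classes the sumset fills the whole subgroup $\langle c_p:p\mid n\rangle$, and generically this subgroup is all of $C(D)$; thus a generic squarefree split $n$ is represented by \emph{every} $f$, which already explains that the leading term is essentially independent of $f$ and of size $\asymp x/(\log x)^{1/2}$, a squarefree Landau--Bernays count.

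Second I would detect both the class data and the congruence with characters. Using Möbius inversion over the (divisor-indexed) subgroup lattice of $C(D)$ together with orthogonality of the characters of $C(D)/U$ for subgroups $U\le C(D)$, and simultaneously inserting Dirichlet characters modulo $q$ to isolate $n\equiv a$, I would write the relevant counts as linear combinations of Euler products $\prod_p(1+g(p)p^{-s})$. Up to a factor holomorphic and non-vanishing near $s=1$, each such product factors through the Hecke $L$-functions $L(s,\psi)$ for $\psi\in\widehat{C(D)}$ and the Dirichlet $L$-functions $L(s,\chi)$ for $\chi\bmod q$: the principal characters produce a singularity $\zeta(s)^{\abs{U}/(2h)}$, since the split primes with class in $U$ have density $\abs{U}/(2h)$, while the non-principal characters contribute factors holomorphic at $s=1$.

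Third I would feed this singularity data into the Landau--Selberg--Delange method. The subgroup $U=C(D)$ yields the exponent $1/2$ and hence the main term $A_1(f,q,a)\,x/(\log x)^{1/2}$, with $A_1$ assembled from the leading Laurent coefficients and local factors and its $a$-dependence coming from the mod-$q$ orthogonality. The leading deficit comes from the unique maximal subgroup of smallest index, namely $H=\langle[f^\star]^{p_0}\rangle$ of index $p_0$, whose split primes have density $1/(2p_0)$; this is the origin of the exponent $1-1/(2p_0)$ in $N'_f$. The constant $c'(q,a)$ and the Euler product $\prod_{p\mid q}(1+\nu_H(p)/p)^{-1}$ arise at this $H$-level, where only primes represented by a class in $H$ (those with $\nu_H(p)=1$) enter the local factors and the primes dividing $q$ must be removed; specializing to prime $q$ and reading off whether $a\equiv0\bmod q$ and whether $\nu_H(q)=1$ gives the four displayed cases.

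The main obstacle is the factor $(\log\log x)^r$ and the exact value of $r$. When all prime factors of $n$ lie in $H$, non-representability by $f$ requires the sign-sumset $\{\prod c_p^{\varepsilon_p}\}$ to \emph{miss} $[f]$, a condition sensitive to the number and the spread of the prime classes occurring in $H$. I expect this finer combinatorics, routed through the $p_0$ characters of $C(D)$ of order dividing $p_0$, to produce a secondary singularity of the generating Dirichlet series not of pure type $(s-1)^{-1/(2p_0)}$ but carrying an extra factor $\bigl(\log\tfrac{1}{s-1}\bigr)^{r}$, which the Selberg--Delange transfer converts into $(\log\log x)^r$. Pinning down $r$ hinges on whether $[f]\in H$: this changes the leading inclusion--exclusion (equivalently, character) contribution at $H$, shifting the dominant log-power and hence $r$ between $p_0-2$ and $p_0-3$. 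Making this bookkeeping rigorous, and controlling the error term uniformly---which requires a suitable zero-free region and analytic continuation for the Hecke $L$-functions attached to $C(D)$ and the Dirichlet $L$-functions modulo $q$---is the crux of the argument.
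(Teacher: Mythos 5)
You have correctly identified the crux, but you have not supplied it, and the route you sketch for it would not work as stated. The paper's proof of this theorem does not detect exceptionality by characters at all: it writes $B'_f(x;q,a) = B'_R(x;q,a) - N'_f(x;q,a)$, gets the main term from the genus count (Landau--Selberg--Delange with genus and Dirichlet characters, as in Theorem \ref{equidgeneralemma}), and then imports Golubeva's structural theorem, which says that after discarding a sparse set, \emph{every} squarefree exceptional integer coprime to $2D$ has the exact shape $n = m p_1\cdots p_r$ with all prime factors of $m$ represented by classes in $H$ and each $p_i$ represented by a prescribed class $[f^\star]^{e_i}$ outside $H$, the admissible exponent tuples being the diagonal ones $(a,\dots,a)$, $a\in(\Z/p_0\Z)^\ast$ (with $a\not\equiv e^\star$ when $[f]\notin H$). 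This is where $r=p_0-2$ or $p_0-3$ comes from, and it is a genuinely combinatorial statement about which sign-sumsets $\{\prod c_p^{\varepsilon_p}\}$ can miss $[f]$; it is not a multiplicative condition on $n$, so it cannot be isolated by M\"obius inversion over the subgroup lattice together with characters of $C(D)/U$ and a Selberg--Delange transfer. Those tools detect conditions like ``all prime classes of $n$ lie in $U$,'' which is only the $r=0$ degenerate case. Your picture of the deficit as ``$n$ with all prime factors in $H$ whose sumset misses $[f]$'' is also structurally off for $p_0>3$: the dominant exceptional integers have exactly $r>0$ prime factors \emph{outside} $H$, and the $(\log\log x)^r$ arises concretely from summing $1/p$ over those $r$ primes in fixed classes (each contributing $\tfrac1h\log\log x$), not from a logarithmic singularity that you would still have to justify.

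Once the Golubeva characterization is granted, the rest of your plan is close in spirit to the paper's: the exponent $1-|H|/(2h)=1-1/(2p_0)$ does come from the density $|H|/(2h)$ of split primes with class in $H$, and the congruence condition is handled by Dirichlet characters. The paper does this via Wirsing's theorem applied to the multiplicative function $\nu_H$ twisted by characters mod $q$, with the key input being an effective Chebotarev theorem for the compositum of the Hilbert class field with $\Q(\zeta_q)$ (using that the Hilbert class field is unramified while $q$ ramifies, so the two Galois conditions are independent); this is what produces the local factor $\prod_{p\mid q}(1+\nu_H(p)/p)^{-1}$ and hence $c'(q,a)$. Your $L$-function formulation of that step is viable, but without an actual proof of the classification of exceptional integers (the paper's \Cref{case1} and \Cref{case2}), the secondary term --- its exponent of $\log\log x$, its dependence on whether $[f]\in H$, and its constant --- remains unestablished.
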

Note that $q/(q^2-1) > 1/(q+1)$, so that (for $q$ prime) $-c'(q,a)$ is larger when $a \equiv 0$ and smaller otherwise.

An examination of numerical data suggests that the bias still exists when one removes the restriction that the integers should be squarefree and coprime to $2D$. See \Cref{cn3}. In \Cref{sectremovesf}, we will discuss why dropping the squarefreeness condition makes the constants in our asymptotic expansion inexplicit.

\textbf{Summary of main theorems:} 
\begin{itemize}
    \item If $G(D) \cong C(D)$, then the biased secondary term will be of size $x/(\log x)^{3/2}$, and will arise from a result on the equidistribution of arithmetic progressions for integers represented by a given genus (\Cref{equidgeneralemma}).
    \item If $C(D)$ is cyclic and of odd order, then the biased secondary term will be of size $x (\log \log x)^\beta/(\log x)^\alpha$, for some $\beta \in \Z_{\geq 0}$, and $\alpha \in [5/6,1)$. The secondary term in this case arises from a characterisation of integers which are represented by some form in the genus of $f$ \textit{but not} by $f$ itself, which will generalise the work of \cite{golubeva2001exceptional}.
\end{itemize}

Several recent works have been concerned with some aspects of the behaviour of the form $x^2+y^2$ in arithmetic progressions. In \cite{gorodetsky_2022}, Gorodetsky demonstrates a Chebyshev-type bias towards quadratic residues for numbers of the form $x^2+y^2$. The most salient feature of Gorodetsky's result is that it occurs in a \textit{natural density} sense, unlike the logarithmic density one obtains when examining Chebyshev's bias for primes.

Inspired by the earlier work of Lemke Oliver and Soundararajan \cite{lemkeoliver_soundararajan_2016}, David, Devin, Nam, and the author studied in \cite{david_devin_nam_schlitt_2021} the distribution of consecutive sums of two squares in arithmetic progressions. \cite[Thm 2.4]{david_devin_nam_schlitt_2021} is a special case of \Cref{thm2} in the present article. 

The function field analogue of Landau's theorem has also been explored. In \cite{gorodetsky_rodgers_2021}, Gorodetsky and Rodgers explore the variance of sums of two squares in short intervals for $\mathbb{F}_q[T]$. Theorem B.1 of their article is of interest, as it provides an integral representation for the number of integers that are the sum of two squares not exceeding $x$, with a $O(x^{1/2+\epsilon})$ error term (under GRH). It would be interesting to consider the extension of this theorem to more general families of binary quadratic forms, and perhaps also to $B_f(x,q,a)$.

\specialsection*{ACKNOWLEDGEMENTS}
We thank Professor Chantal David for her support throughout this project, as well as for her careful review of numerous drafts. We would also like to thank the organizers of the MOBIUS ANT seminar for inviting us to present an earlier version of our work in a friendly and constructive environment.

The generation of numerical data in the present paper as well as the numerical approximation of constants have been done using Mathematica \cite{MATH} and Sage \cite{SAGE}. All such computations were done on the author's personal computer, equipped with an AMD Ryzen 7 5800X 8-Core Processor.
%

\section{Proof of Theorems 1.1 and 1.3}\label{MT}
\textit{Proof of Theorem 1.1}: \Cref{Fomthm} tells us that the main term in the asymptotic expansion of $B_R(x;q,a)$ matches that of $B_f(x;q,a)$. In light of this fact, it can be seen that \Cref{thm1} is implied from the following:
\begin{theorem} \label{equidgeneralemma}
    Let $q$ be a positive integer, and let $(q,2D) = 1$, where $D$ is a negative fundamental discriminant. Let $R \in G(D)$ be a genus of forms. Then, for every $J \in \mathbb{N}$, we can write
    \begin{equation}
    \label{EQ BR L1}
        B_R(x) := B_R(x;1,1) = \sum_{j = 0}^J \frac{a_j x}{(\log x)^{1/2 + j}} +O\left(\frac{x}{(\log x)^{3/2+J}}\right)
    \end{equation}
    and for every $a$ satisfying $(a,q) = 1$, we can write
    $$B_R(x;q,a) = \sum_{j = 0}^J \frac{b_j x}{(\log x)^{1/2 + j}} +O\left(\frac{x}{(\log x)^{3/2+J}}\right)$$
    for positive constants $a_j$ and $b_j$ that do not depend on $R$ or on $a$. $a_j$ is defined by \eqref{ai}, and the first two $b_j$ are given by
    \begin{align}
    \label{constant b0 lemma 1}
        b_0 &= a_0\frac{1}{q}\prod_{\substack{ p | q \\ (D|p) = -1}}(1+p^{-1}),\\
        \label{constant b1 lemma 1}
        b_1 &= a_1 \frac{1}{q}\prod_{\substack{ p | q \\ (D|p) = -1}}(1+p^{-1}) \left( 1- \frac{a_0}{2a_1}\left(\sum_{p | q} \frac{\log p}{p-1} - \sum_{\substack{p | q \\ (D|p) = -1}} \frac{\log p}{p+1}\right) \right).
    \end{align}
\end{theorem}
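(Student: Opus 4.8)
The plan is to pass to Dirichlet series and isolate the unique source of the singularity at $s=1$. Writing $\beta(n)=\sum_{R\in G(D)}1_R(n)$ for the indicator that $n$ is represented by some form of discriminant $D$, genus theory lets us detect an individual genus through its genus characters: for $(n,2D)=1$ one has
$$1_R(n)=\frac{1}{|G(D)|}\sum_{\chi\in\widehat{G(D)}}\overline{\chi(R)}\,\chi(n)\,\beta(n),$$
where each genus character $\chi$ corresponds to a factorization $D=D_1D_2$ into coprime fundamental discriminants and restricts to a completely multiplicative $\pm1$-valued function on the representable integers with $\chi(p)=\legendre{D_1}{p}$ at split primes. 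I would handle the congruence condition by orthogonality of Dirichlet characters modulo $q$: since $(a,q)=1$ forces $(n,q)=1$,
$$B_R(x;q,a)=\frac{1}{\phi(q)}\sum_{\psi\bmod q}\overline{\psi(a)}\sum_{n\le x}1_R(n)\psi(n).$$
Combining the two decompositions reduces the theorem to understanding, for each pair $(\chi,\psi)$, the twisted series $F_{\chi,\psi}(s)=\sum_n\beta(n)\chi(n)\psi(n)n^{-s}$.

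The key computation is an Euler-product factorization. Splitting the local factors according to whether $p$ is split, inert, or ramified, and killing those with $p\mid q$, a direct calculation gives
$$F_{\chi,\psi}(s)=\bigl(L(s,\chi_{D_1}\psi)\,L(s,\chi_{D_2}\psi)\bigr)^{1/2}\,H_{\chi,\psi}(s),$$
where $H_{\chi,\psi}$ is an Euler product converging absolutely for $\Re s>1/2$, hence holomorphic and nonvanishing near $s=1$. The decisive point is that $F_{\chi,\psi}$ acquires an $(s-1)^{-1/2}$ singularity exactly when one of $\chi_{D_i}\psi$ is principal; since $\chi_{D_i}$ has conductor dividing $D$ and $(q,2D)=1$, this forces both $\psi=\psi_0$ and $\chi=\chi_0$ (the principal genus character, $D_1=1$). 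For every other pair, $L(s,\chi_{D_1}\psi)L(s,\chi_{D_2}\psi)$ is holomorphic and nonzero at $s=1$ by the nonvanishing of Dirichlet $L$-functions there, so $F_{\chi,\psi}$ is holomorphic at $s=1$ and contributes $O\bigl(x\exp(-c\sqrt{\log x})\bigr)$, which is smaller than any power of $1/\log x$. Thus the entire power-of-log expansion comes from the single term $(\chi_0,\psi_0)$, manifestly independent of $R$ and of $a$; this is the structural reason the $a_j$ and $b_j$ do not depend on them.

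It then remains to expand $F_{\chi_0,\psi_0}(s)=\sum_{(n,q)=1}\beta(n)n^{-s}=(s-1)^{-1/2}G(s)P_q(s)$ by the Selberg--Delange method, where $G(s)$ is the holomorphic factor attached to the full representable count (giving the $a_j$ via \eqref{EQ BR L1}) and
$$P_q(s)=\prod_{\substack{p\mid q\\ \legendre{D}{p}=1}}(1-p^{-s})\prod_{\substack{p\mid q\\ \legendre{D}{p}=-1}}(1-p^{-2s})$$
records the Euler factors removed at $p\mid q$. Selberg--Delange then yields $B_R(x;q,a)=\frac{1}{|G(D)|\phi(q)}\sum_{j\le J}\tilde c_j\,x(\log x)^{-1/2-j}+O(x(\log x)^{-3/2-J})$, the $\tilde c_j$ being read off from the Taylor coefficients of $G(s)P_q(s)$ at $s=1$. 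Extracting $j=0$ and simplifying $\phi(q)^{-1}P_q(1)$ gives exactly \eqref{constant b0 lemma 1}; the $j=1$ coefficient picks up the logarithmic derivative $P_q'(1)/P_q(1)=\sum_{p\mid q}\frac{\log p}{p-1}-\sum_{p\mid q,\,\legendre{D}{p}=-1}\frac{\log p}{p+1}$, which produces the bracketed factor in \eqref{constant b1 lemma 1}.

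The hard part will be the analytic bookkeeping rather than the strategy: one must verify the factorization uniformly over all twists (including the finitely many ramified Euler factors and the correct branch of the square root along the Hankel contour), confirm that $H_{\chi,\psi}$ and the relevant $L$-functions are holomorphic and nonvanishing in a standard zero-free region so that Selberg--Delange applies with the claimed error term, and carry out the coefficient extraction carefully enough to match $b_0$ and $b_1$. The nonvanishing $L(1,\chi_{D_i}\psi)\neq 0$ for nonprincipal twists---ultimately Dirichlet's theorem on $L(1,\chi)$---is what guarantees that the nonprincipal terms are genuinely negligible and do not perturb the expansion.
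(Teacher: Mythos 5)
Your proposal follows essentially the same route as the paper's proof: detect the genus via orthogonality of genus characters and the congruence via Dirichlet characters mod $q$, factor each twisted Dirichlet series as the square root of a product of two Dirichlet $L$-functions times an Euler product holomorphic for $\Re(s)>1/2$, observe that only the doubly-principal pair $(\chi_0,\psi_0)$ is singular at $s=1$, and apply Selberg--Delange; your $P_q(s)$ coincides with the paper's correction factor $C_q(s)$, and the coefficient extraction via $P_q(1)$ and $P_q'(1)/P_q(1)$ reproduces \eqref{constant b0 lemma 1} and \eqref{constant b1 lemma 1}. The argument is correct and matches the paper's.
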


\begin{proof}[Proof of \Cref{equidgeneralemma}]
We loosely follow the proof of a similar theorem which appears in \cite{Lut67}, where an asymptotic for the number of rational integers that are the norm on an algebraic integer of a given quadratic number field is obtained. We begin with a Lemma which completely describes how the genus characters act on the primes.
\begin{lemma}
Let $D$ be the discriminant of the imaginary quadratic field $K$. There is a one-to-one correspondence between the characters of the genus group $G(D)$ and the factorizations $D = uv$ of $D$ into two fundamental discriminants $u$ and $v$ (treating $uv$ and $vu$ as the same factorization). The relation between the genus characters $\psi \in \widehat{G(D)}$ and the factorizations is expressed by
$$\psi(\mathfrak{p}) := \begin{cases} \left(\frac{u}{N(\mathfrak{p})}\right), \textit{ if } (u, \mathfrak{p}) = 1, \\ \vspace{-0.37cm} \\ \left(\frac{v}{N(\mathfrak{p})}\right), \textit{ if } (v, \mathfrak{p}) = 1, \end{cases}$$
where $\mathfrak{p}$ is any prime ideal of $\mathcal{O}_K$.
\end{lemma}
\begin{proof}
 Note that any real character of the ideal class group $C(D)$ will correspond with a character of the genus group $G(D) = C(D)/C(D)^2$, as 
\begin{align*}
    \psi \textit{ is a character of } G(D) & \iff \psi \textit{ is a character of } C(D)\\ & \quad \quad \textit{ satisfying } \psi(g^2 n) = \psi(g'^2 n) \quad \forall g,g',n \in C(D) \\
    & \iff \psi \textit{ is a real character of } C(D).
\end{align*}
The number of genera of discriminant $D$ is $2^{\mu - 1}$. The integer $\mu$ is defined as follows: if $D \equiv 1 \bmod 4$, $\mu = s$, the number of distinct odd prime factors of $D$. If $D = 4n$, then
$$\mu = \begin{cases} s, \quad n \equiv 1 \bmod 4\\
s+1 \quad n \equiv 2,3 \bmod 4\\
s+1 \quad n \equiv 4 \bmod 8\\
s+2 \quad n \equiv 0 \bmod 8
\end{cases}$$
See Theorem 3.15 of \cite{Cox} for the proof of this fact. This proves that there are $2^{\mu - 1}$ characters of $G(D)$. We note that we get one real character $\psi(\mathfrak{p}) \in \widehat{C(D)}$ for each factorization $D = uv$ as defined in the statement of the lemma. One can calculate that there are in total $2^{\mu - 1}$ such factorizations of $D$, so that the set of characters of $G(D)$ which are as in the statement of the lemma (coming from a factorization $D=uv$) are actually all the characters of $G(D)$. This proves the $1-1$ correspondence.
\end{proof}
From each genus character, we construct the generating series
\begin{equation}
\label{gen series b n psi}
    F(s,\psi):=\sum_{n \geq 1} b(n,\psi) n^{-s},
\end{equation}
for $\Re(s) > 1$, where $$b(n,\psi) = \begin{cases} 0, \textit{ if } n \textit{ is not the norm of any ideal,}\\ \psi(\mathfrak{a}), \textit{ if } n = N(\mathfrak{a} ).\end{cases}$$

If $p \in \Z$ is such that $(D|p) = 0$, then $p O_K = \mathfrak{p}^2$, and so $p = N(\mathfrak{p})$, if $(D|p) = 1$, then $p O_K = \mathfrak{p}_1 \mathfrak{p}_2$, so that $p = N(\mathfrak{p}_1) = N(\mathfrak{p}_2)$. Finally, if $(D|p) = -1$, then $p O_K = \mathfrak{p}$, so that $p^2 = N(\mathfrak{p})$, and $p$ is not the norm of any ideal. These remarks imply that $b(n,\psi)$ is well defined. 

Our reason for introducing $b(n,\psi)$ is that it satisfies an orthogonality relation that allows us to detect when $1_R(n) = 1$. Indeed, when $n$ is the norm of some ideal $\mathfrak{a}$, we have:
\begin{equation}
    \label{ortrelationsgenus}
\begin{split}
    1_R(n) &= \frac{1}{G(D)}\sum_{\psi \in \widehat{G(D)}} b(n,\psi) \psi(R)^{-1}\\
    &= \frac{1}{G(D)}\sum_{\psi \in \widehat{G(D)}} \psi(\mathfrak{a})\psi(R)^{-1}\\
    &= \begin{cases}
        1, \quad [\mathfrak{a}] \in R\\
        0, \quad \textit{otherwise.}
    \end{cases}
\end{split}
\end{equation}
Above, $[\mathfrak{a}]$ refers to the ideal class of $\mathfrak{a}$ in the ideal class group $C(D)$. When $n$ is not the norm of any ideal, then $1_R(n) = b(n,\psi) = 0$ for every $\psi \in \widehat{G(D)}$, so the first line of \eqref{ortrelationsgenus} still holds.

Using the multiplicativity of $b(n,\psi)$, we find that 
$$F(s,\psi) = \prod_{p | D} (1-\psi(\mathfrak{p}) p^{-s})^{-1}\prod_{(D|p) = 1} (1-\psi(\mathfrak{p}) p^{-s})^{-1}\prod_{(D|p) = -1} (1-\psi(\mathfrak{p}) p^{-2s})^{-1}.$$
In each term of the above product, $\mathfrak{p}$ denotes any prime above $p$. In the cases when $p O_K = \mathfrak{p}_1\mathfrak{p}_2$, we note that $\psi(\mathfrak{p}_1) = \psi(\mathfrak{p}_2)$, so we can choose either one. We simplify the Euler factors for each prime in the three possible cases (ramifies, splits, inert). 

\textbf{Ramifies:} If $p|D$, then $p|uv$, and $p|u \iff p \nmid v$, and $$(1 - \psi(\mathfrak{p})p^{-s}) = \left(1 - \left(\frac{u}{p}\right)p^{-s}\right)\left(1 - \left(\frac{v}{p}\right)p^{-s}\right),$$
since one of the two factors on the right hand side is always $1$. 

\textbf{Splits:} If $(D|p) = 1$, then $\left(\frac{u}{p}\right)\left(\frac{v}{p}\right) = 1$, and 
\begin{align*}
    (1 - \psi(\mathfrak{p})p^{-s}) &= \left(1 - \left(\frac{u}{p}\right)p^{-s}\right)\\ &= \left(1 - \left(\frac{v}{p}\right)p^{-s}\right)\\ &= \left(1 - \left(\frac{u}{p}\right)p^{-s}\right)^{1/2}  \left(1 - \left(\frac{v}{p}\right)p^{-s}\right)^{1/2}.
\end{align*}
\textbf{Inert:} if $(D|p) = -1$, then $N(\mathfrak{p}) = p^2$, hence $\left(\frac{u}{N(\mathfrak{p})}\right) = \left(\frac{v}{N(\mathfrak{p})}\right) = 1$, and
$$(1 - \psi(\mathfrak{p})p^{-2s}) = (1 -p^{-2s}) = \left(1 - \left(\frac{u}{p}\right)p^{-s}\right)\left(1 - \left(\frac{v}{p}\right)p^{-s}\right).$$
Putting all this information together, we find that
\begin{align*}
    F(s,\psi) &= \prod_{p | D} \left(1 - \left(\frac{u}{p}\right)p^{-s}\right)^{-1} \left(1 - \left(\frac{v}{p}\right)p^{-s}\right)^{-1} \\ &\times \prod_{(D|p) = 1}\left(1 - \left(\frac{u}{p}\right)p^{-s}\right)^{-1/2}  \left(1 - \left(\frac{v}{p}\right)p^{-s}\right)^{-1/2}\\ &\times \prod_{(D|p) = -1} \left(1 - \left(\frac{u}{p}\right)p^{-s}\right)^{-1}\left(1 - \left(\frac{v}{p}\right)p^{-s}\right)^{-1}.
\end{align*}
Let us define
\begin{align*}
    L_u(s) &= \prod_p \left(1-\left(\frac{u}{p}\right)p^{-s}\right)^{-1}\\
    L_v(s) &= \prod_p \left(1-\left(\frac{v}{p}\right)p^{-s}\right)^{-1}.
\end{align*}
We now have
\begin{equation}
    \label{eqgenserieslemma1}
    F(s,\psi)^2 = L_u(s)L_v(s) A(s),
\end{equation}
where $$A(s) = \prod_{p|D} \left(1 - \left(\frac{u}{p}\right)p^{-s}\right)^{-1} \left(1 - \left(\frac{v}{p}\right)p^{-s}\right)^{-1} \prod_{(D|p) = -1} (1- p^{-2s})^{-1}.$$
We note that $A(s)$ is analytic for $\Re(s) > 1/2$, and $L_u(s)L_v(s)$ is entire unless $u = 1, v=D$, since when $u \ne 1, v \ne D$, both $L_u$ and $L_v$ can be viewed as Dirichlet L-functions of non-principal characters.\footnote{$D$ has no square factors, except maybe $4$.}~We note that the pair $u=1,v=D$ corresponds to the principal genus character $\psi_0$ and we have 
$$L_1(s)L_D(s) = \zeta(s)L_D(s).$$
Taking (principal) square roots on either side of \eqref{eqgenserieslemma1}, we have
$$F(s,\psi_0) = \left(\zeta(s)A(s)L_D(s)\right)^{1/2}.$$
We wish to use the Dirichlet series $F(s,\psi_0)$ to make a conclusion about the partial sums $\sum_{n \leq x} b(n,\psi_0)$. We can make use of Perron's formula in the usual way to get the relation
$$\sum_{n \leq x} b(n,\psi_0) = \frac{1}{2 \pi i} \int_{2 - i \infty}^{2 + i \infty} F(s,\psi_0) \frac{x^s}{s}ds,$$
but we are not able to apply Cauchy's theorem to evaluate the contribution from the singularity at $s=1$ of the integrand, since it is not a pole. Instead, we appeal to the Landau--Selberg--Delange method:
\begin{theorem} \cite[Theorem $13.2$]{koukoulopoulos_2019} \label{LSD} Let $f(n)$ be a multiplicative function with generating series $F(s) = \sum_{n \geq 1} f(n) n^{-s}$. Suppose there exists $\kappa \in \mathbb{C}$ such that for $x$ large enough
	$$\sum_{p \leq x} f(p) \log{p} = \kappa x + O_A \left( x/(\log{x})^A \right),$$
	{for each fixed $A>0$,} and such that $|f(n)| \leq \tau_k(n)$ for some $k \in \mathbb{N}$, where $\tau_k$ is the $k$-th divisor function.
	For $j \geq 0$, let $\widetilde{c_j}$  be the Taylor coefficients about 1 of the function $(s-1)^\kappa F(s)/s$. Then, for any $J  \in \mathbb{N}$, and $x$ large enough, we have
		$$
	\sum_{n \leq x} f(n) = x \sum_{j=0}^{J} \widetilde{c_j} \frac{(\log{x})^{\kappa-j-1}}{\Gamma(\kappa-j)} + O \left( \frac{x}{(\log{x})^{J+2-\Re(\kappa)}}\right).
	$$
	\end{theorem}

Applying \Cref{LSD} to $F(s,\psi)$, we conclude that for any $J \geq 0$, one has
\begin{equation}
\label{intermed step l1 eq}
    \sum_{n \leq x}b(n,\psi) = \delta_{\psi} \sum_{j=0}^J\frac{\Tilde{a_j} x}{\Gamma(1/2-j) (\log x)^{1/2+j}} + \mathcal{O}\left(\frac{x}{ (\log x)^{J+3/2}}\right),
\end{equation}
where $\delta_{\psi} = 0$, unless $\psi= \psi_0$, in which case $\delta_{\psi_0} = 1$, and $\Tilde{a_j}$ is the $j^{th}$ Taylor coefficient of $(s-1)^{1/2}F(s,\psi_0)/s$. This is essentially a consequence of the Prime Number Theorem in arithmetic progressions. When $\psi \ne \psi_0$, then
\begin{align*}
    \sum_{p \leq x}b(p,\psi)\log p &\ll \sum_{\substack{p \leq x\\\legendre{D}{p}} = 1} \legendre{u}{p}\log p\\
    &\ll_A \frac{x}{(\log x)^A},
\end{align*}
for any $A>0$, since $u \ne 1$ which makes $\legendre{u}{\cdot}$ a non-principal Dirichlet character.

Combining \eqref{ortrelationsgenus} with \eqref{intermed step l1 eq} we find that for each fixed genus $R \in G(D)$ we have, for any $J \geq 0$,
\begin{equation}
    \begin{split}
        B_R(x) &:= \sum_{\substack{n \leq x}} 1_R(n)\\&= \frac{1}{|G(D)|}\sum_{n \leq x} \sum_{\psi \in \widehat{G(D)}} b(n,\psi) \psi(R)^{-1} \\
        &= \frac{1}{|G(D)|} \sum_{\psi \in \widehat{G(D)}}\psi(R)^{-1} \delta_{\psi} \sum_{j=0}^J\frac{\Tilde{a_j} x}{\Gamma(1/2-j) (\log x)^{1/2+j}} + \mathcal{O}\left(\frac{x}{ (\log x)^{J+3/2}}\right)\\ 
        &= \frac{1}{|G(D)|} \sum_{j=0}^J\frac{\Tilde{a_j} x}{\Gamma(1/2-j) (\log x)^{1/2+j}} + \mathcal{O}\left(\frac{x}{ (\log x)^{J+3/2}}\right).
    \end{split}
\end{equation}
We then adopt the notation
\begin{equation}
    \label{ai}
    a_j = \frac{\Tilde{a}_j}{\Gamma(1/2-j)|G(D)|}.
\end{equation}
We can repeat this argument with the addition of a congruence condition modulo $q$ inserted. Define
$$F(s,\psi,\chi) := \sum_{n \geq 1} b(n,\psi) \chi(n) n^{-s},$$
where $\chi$ is a Dirichlet character modulo $q$. Applying \eqref{ortrelationsgenus} as well as the orthogonality relations for Dirichlet characters, one has
\begin{equation}
\label{ortrel psi and chi}
    \frac{1}{\phi(q) |G(D)|}\sum_{\chi \bmod q} \sum_{\psi \in \widehat{G(D)}} b(n,\psi) \chi(n) \chi^{-1}(a) \psi^{-1}(R) = \begin{cases}
        1, \quad 1_R(n) = 1 \textit{ and }n \equiv a \bmod q\\
        0, \quad \textit{otherwise.}
    \end{cases}
\end{equation}

Since we are assuming $(q,D) = 1$, then the conductor of $\chi$ will be coprime to $D$. As such, the only choice of characters that makes $F(s,\psi,\chi)$ have a singularity at $s=1$ is when $\psi = \psi_0$, and $\chi=\chi_0$, the principal character modulo $q$. We then have

\begin{equation}
\label{relationchiwithnonchilemma1}
    \begin{split}
    F(s,\psi_0,\chi_0) &= C_q(s)\left(\zeta(s)A(s)L_D(s)\right)^{1/2}\\
    &=C_q(s)F(s,\psi_0),
\end{split}
\end{equation}
where
$$C_q(s) := \prod_{p|q}(1-p^{-s})\prod_{\substack{p|q \\ (D|p) = -1}}(1+p^{-s}).$$
Applying \Cref{LSD} again, this time to each $F(s,\psi,\chi)$, we conclude that for any $J \geq 0$, one has
\begin{equation}
\label{intermed step 2 l1 eq}
    \sum_{n \leq x}b(n,\psi)\chi(n) = \delta_{\psi,\chi} \sum_{j=0}^J\frac{\Tilde{b_j} x}{\Gamma(1/2-j) (\log x)^{1/2+j}} + \mathcal{O}\left(\frac{x}{ (\log x)^{J+3/2}}\right),
\end{equation}
where $\delta_{\psi,\chi} = 0$, unless $\psi= \psi_0$ and $\chi = \chi_0$, in which case $\delta_{\psi_0,\chi_0} = 1$. Above, $\Tilde{b_j}$ is the $j^{th}$ Taylor coefficient of $(s-1)^{1/2}F(s,\psi_0,\chi_0)/s$. Combining \eqref{ortrel psi and chi} with \eqref{intermed step 2 l1 eq}, then for every $a$ such that $(a,q) = 1$, one has
$$\sum_{\substack{n \leq x\\ n \equiv a \bmod q }} 1_R(n) = \frac{1}{\phi(q)}\frac{1}{|G(D)|}  \sum_{j=0}^J\frac{\Tilde{b_j} x}{\Gamma(1/2-j) (\log x)^{1/2+j}} + \mathcal{O}\left(\frac{x}{ (\log x)^{J+3/2}}\right).$$
This proves \Cref{equidgeneralemma}, as we see that 
$$b_j = \frac{\Tilde{b_j}}{\Gamma(1/2 - j) \phi(q)|G(D)|},$$
and the constants $b_j$ are independent of $R$. We can easily express the $b_j$ in terms of the $a_j$ as in the statement of the lemma by comparing the Taylor coefficients of $F(s,\psi_0)$ and $F(s,\psi_0,\chi_0)$; these generating series differ only by the factor $C_q(s)$, as can be seen in \eqref{relationchiwithnonchilemma1}.
\end{proof}

This also concludes the proof of \Cref{thm1} by our remarks at the start of this section. Additionally, \Cref{equidgeneralemma} immediately implies \Cref{thm2}.
\begin{proof}[Proof of \Cref{thm2}]
   We begin by noting that when there is a single form per genus, then $B_f(x;q,a) = B_R(x;q,a)$. As in \eqref{constant b1 lemma 1}, the constant multiplying the secondary term for $B_f(x;q,a)$ will be
$$\frac{C_q(1)}{\phi(q) |G(D)|}a_1\left( 1-\frac{a_0}{a_1}\frac{C'_q(1)}{2C_q(1)} \right) $$
when $a \not \equiv 0 \bmod q$, and 
$$\frac{1-C_q(1)}{|G(D)|}a_1 \left(1 + \frac{a_0}{a_1}\frac{C_q'(1)}{2(1-C_q(1))}\right)$$
when $a \equiv 0 \bmod q$ (again, assuming $q$ is prime). Simplifying the above and noting that $a_0/a_1 > 0$ immediately yields \Cref{thm2}. 
\end{proof}
As such, \Cref{equidgeneralemma} gives us an explanation for the numerical bias towards the zero residue class.
\section{Exceptional Integers in Arithmetic Progressions}
\label{sect2}
In this section, $q$ denotes a prime modulus, and we assume that $C(D)$ is cyclic of odd order $h$. We will show how the bias towards the zero residue class\footnote{At least, for squarefree integers that are coprime to $2D$. See \cref{sectremovesf} for details on the (still unproven) general case.} in this case arises from computing the secondary term in the asymptotic expansion of $B'_f(x;q,a)$.

When there is more than one form in the given genus $R$, we do not have the equality $B_f(x;q,a) = B_R(x;q,a)$, except in the sense of \Cref{Fomthm}. The secondary term of $B'_f(x;q,a)$ cannot be computed using \Cref{Fomthm}, due to the $O(x/(\log x)^{2/3})$ error term. We must, however, use a version of \Cref{Fomthm} to compute the main term of $B'_f(x;q,a)$.

\subsection{Proof of \Cref{thm3}}
\label{pf gol thm ap sect}
 Due to \Cref{Fomthm}, studying the secondary term of $B_f(x;q,a)$ is equivalent to estimating the number of integers which are represented by a form from the genus containing $f$, but not represented by $f$ itself. We refer to these as the ``exceptional integers" for $f$. We define $$N_f(x,q,a) := \#\{n \leq x: n \equiv a \bmod q, n \text{ is exceptional for }f\}.$$ It is clear that one has
\begin{equation}
\label{subtractexep}
    B_f(x;q,a) = B_R(x;q,a) - N_f(x,q,a).
\end{equation}
\subsubsection{Summary of Golubeva's Paper}
In \Cref{thm2}, $N_f(x,q,a)$ was always $0$. \Cref{Fomthm} can be viewed as a result giving an upper bound on the size of $N_f(x,q,a)$, and a similar result appears in \cite{golubeva1996representation}. Computing asymptotics for these exceptional integers turns out to be tricky, with the only result known to the author being the following of Golubeva \cite{golubeva2001exceptional}:
\begin{theorem}[Golubeva, 2001]\label{goluthm}
Let $C(D)$ be cyclic of odd order $h$. Let $[f^\star]$ be a generator of $C(D)$. Let $p_0$ be the smallest prime divisor of $h$, and let $H \subset C(D)$ be the subgroup generated by $[f^\star]^{p_0}$. Let $N_f(x):=N_f(x,1,1)$. Then (writing $[f]$ to be the class that $f$ lies in), 

if $[f] \in H$, one has
$$N_f(x) = C_1 \frac{x}{(\log x)^{1-1/(2p_0)}}(\log \log x)^{p_0-2}(1+o(1)),$$
and if $[f] \not \in H$, then one has
$$N_f(x) = C_2 \frac{x}{(\log x)^{1-1/(2p_0)}}(\log \log x)^{p_0-3}(1+o(1)).$$
where $C_1,C_2$ are positive constants depending only on $f$.
\end{theorem}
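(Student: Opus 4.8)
The plan is to convert the analytic problem into a combinatorial one about ideal classes, exploiting that $h$ odd forces $G(D)\cong C(D)/C(D)^2$ to be trivial, so that ``representable by the genus'' simply means ``representable by some form of discriminant $D$,'' and an integer is exceptional for $f$ precisely when it is a norm from $\mathcal{O}_K$ while $[f]$ is not among the classes representing it. First I would reduce to squarefree $n$ coprime to $2D$ (the main contribution; prime powers and ramified/inert-square contributions are handled by a separate, lower-order multiplicative estimate) and record the representability criterion: such an $n$ is a norm iff every prime dividing it splits, in which case the set of classes representing $n$ is the signed sumset $S(n)=\{\sum_i \varepsilon_i a_i \bmod h : \varepsilon_i \in \{\pm 1\}\}$ in $C(D)\cong\Z/h\Z$, where $g^{a_i}=[\mathfrak{p}_i]$ for the fixed generator $g=[f^\star]$. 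Writing $[f]=g^c$, the integer $n$ is exceptional exactly when $c\notin S(n)$.

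The key structural step is to analyse $S(n)$ through the quotient $C(D)/H\cong\Z/p_0\Z$. Separate the prime factors of $n$ into those with class in $H$ (the ``bulk'') and those with class outside $H$. For $n$ with many bulk primes --- the generic situation, since a representable $n\le x$ typically has $\asymp\log\log x$ prime factors --- the signed sumset of the bulk classes fills all of $H$ (here I use that $|H|=h/p_0$ is odd, so doubling is invertible and $\pm$-sums of sufficiently many elements of a cyclic subgroup exhaust it). Granting this, $c\in S(n)$ reduces to the single condition that $\bar{c}$ lie in the signed sumset of the out-of-$H$ classes modulo $p_0$. Thus, up to a lower-order set of ``deficient'' $n$ (too few bulk primes, or bulk classes trapped in a proper subgroup),
\[ n \text{ is exceptional} \iff \bar{c} \notin \Big\{\textstyle\sum_{i\,:\,a_i\notin H}\varepsilon_i\,\bar{a}_i \bmod p_0\Big\}. \]
A Cauchy--Davenport bound on this signed sumset in $\Z/p_0\Z$ now caps the number $t$ of out-of-$H$ primes: the sumset has size at least $\min(p_0,t+1)$, and, being symmetric, if it omits $\bar{c}$ it must omit $-\bar{c}$ too. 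Hence $t\le p_0-2$ when $[f]\in H$ (where $\bar{c}=0$ and only $0$ can be omitted), and $t\le p_0-3$ when $[f]\notin H$ (where $\bar{c}\neq 0$ forces two omitted elements). In both cases the dominant contribution comes from the maximal admissible $t$, namely $r=p_0-2$ or $r=p_0-3$; explicit extremal families (e.g.\ all out-of-$H$ primes sharing a single suitable residue mod $p_0$) show that such configurations occur with positive frequency, so the count does not collapse.

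With this skeleton, the counting is carried out by \Cref{LSD}. The bulk primes are exactly those whose ideal class lies in $H$; by the prime ideal theorem in the Hilbert class field of $K$ these have Dirichlet density $\tfrac12\cdot\tfrac{|H|}{h}=\tfrac{1}{2p_0}$, so summing the indicator of ``squarefree, all prime factors in $H$'' against \Cref{LSD} with $\kappa=\tfrac{1}{2p_0}$ produces a main term of size $x/(\log x)^{1-1/(2p_0)}$. The $r$ free out-of-$H$ primes are summed separately: each contributes a factor $\sum_{p\le x}1/p$ over the admissible residues, of size $\asymp\log\log x$, so that $r$ of them yield $(\log\log x)^{r}$. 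Assembling the Selberg--Delange main term with these prime-sum factors gives the stated shape $C_i\,x(\log\log x)^{r}/(\log x)^{1-1/(2p_0)}(1+o(1))$, and positivity of $C_1,C_2$ follows because the extremal class-configurations contribute nonnegatively and are nonempty.

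I expect the main obstacle to be the passage from the exact condition $c\in S(n)$ to its mod-$p_0$ reduction: one must quantify, with a genuine error term rather than heuristically, the contribution of the ``deficient'' integers whose bulk $H$-primes fail to saturate $H$, and show it is of smaller order than $x(\log\log x)^{r}/(\log x)^{1-1/(2p_0)}$. Closely related is making the ``exactly $r$ free primes dominates'' step rigorous, which needs an inclusion--exclusion (or a matching upper/lower sandwich) to separate the leading $(\log\log x)^{r}$ term from the $(\log\log x)^{r-1}$ and lower contributions, together with uniform control of the Selberg--Delange error as the free primes range up to $x$. Extending the argument from squarefree $n$ coprime to $2D$ to all $n$ is routine in principle but requires re-examining representability at ramified and inert primes; I would expect this to affect the constants $C_1,C_2$ but not the order of magnitude.
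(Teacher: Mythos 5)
Your proposal follows essentially the same route as the paper, which itself defers to Golubeva's original argument: characterize the exceptional squarefree $n$ coprime to $2D$ as a product of an $H$-smooth part with at most $r$ primes in prescribed non-$H$ classes (\Cref{case1,case2}), then count these by a Selberg--Delange/Wirsing argument with $\kappa=1/(2p_0)$ for the bulk and a $(\log\log x)^r$ factor for the free primes (\Cref{gollemma1}). Your signed-sumset and Cauchy--Davenport derivation is a faithful reconstruction of Golubeva's Lemmas 2--6 --- indeed, in the case $[f]\notin H$ your admissibility condition $a\equiv\pm e^\star\bmod p_0$ appears to be the correct one, in contrast with the condition stated in \Cref{case2} --- and the technical points you flag (controlling the deficient $n$ whose $H$-part fails to saturate $H$, and isolating the top power of $\log\log x$) are precisely the steps handled in the cited work.
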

The constants $C_1$ and $C_2$ are not computed explicitly in Golubeva's paper. See \Cref{sectremovesf} for more comments on the computation of these constants, which turn out to be difficult to do explicitly. Golubeva proves \Cref{goluthm} through a series of lemmas. Lemmas 2-6 of their paper imply that associated to each class of forms $[f] \in C(D)$ is a finite set of tuples $([f_{1}],[f_{2}],\cdots [f_{r}]), \quad [f_i] \in C(D)$, such that once one discards a sparse set, all squarefree integers $n$ which are exceptional for $f$ and coprime to $2D$ will be of the form
\begin{equation}
\label{formgol}
    n = m p_{1} \cdots p_{r},
\end{equation}
where each prime divisor of $m$ is represented by a form in $H$, and $p_{i}$ is any prime represented by $[f_{i}]$. One also has that
$$r = \begin{cases}
    p_0-2, [f] \in H\\
    p_0-3, [f] \not\in H.
\end{cases}$$
\begin{remark*}
    We will consider two tuples $([f_1],\cdots, [f_r])$ and $([g_1],\cdots, [g_r])$ to be distinct if there is no set of indices $i,j$ such that $[f_i] = [g_j]^{\pm 1}$ for $1 \leq i,j \leq r$. If $p_i$ is any prime represented by $[f_i]$, and $q_i$ is any prime represented by $[g_i]$, then $p_1 \cdots p_r$ may be equal to $q_1 \cdots q_r$ if and only if the tuples $([f_1],\cdots, [f_r])$ and $([g_1],\cdots, [g_r])$ are \textbf{not} disctinct. This follows from the fact that a prime is uniquely represented by a class of forms and that class' inverse. 
\end{remark*}
There is a more explicit description of these tuples in both cases. Again taking $[f^\star]$ to be a generator of $C(D)$, we may write $[f_{i}] = [f^\star]^{e_i}$ for each $i$. Lemmas 5 and 6 of Golubeva's paper imply the following two propositions:
\begin{prop}
\label[prop]{case1}
    Let $[f] \in H$ and let $p_0$ be defined as above. The squarefree integers coprime to $2D$ which are exceptional for $f$ (after discarding a sparse set) are those of the form
    $$mp_1p_2\cdots p_{p_0-2},$$
    where each prime divisor of $m$ is represented by a form in $H$, and $p_i$ is represented by the class of forms $[f_i] = [f^\star]^{e_i}$. The set of choices for $(e_1,e_2,\cdots,e_{p_0-2})$ is the set of diagonal nonzero tuples $(a,a,\cdots,a), \; a \in (\Z/p_0\Z)^\ast$.
\end{prop}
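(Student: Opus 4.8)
The plan is to reduce the statement to a purely combinatorial fact about signed sumsets in $\Z/p_0\Z$, and then to feed that fact into the structural description of exceptional integers already extracted from Golubeva's Lemmas~2--6. Throughout I identify $C(D)/H$ with $\Z/p_0\Z$ via $[f^\star]\mapsto 1$, and for a split prime $p$ (i.e. $\legendre{D}{p}=1$) I write $e(p)\in\Z/p_0\Z$ for the image in this quotient of the ideal class above $p$; this is well defined up to sign, since $p$ is represented by an ideal class and its inverse, and $e(p)=0$ exactly when $p$ is represented by a form in $H$. Recall that a squarefree $n$ coprime to $2D$ is represented by a form of discriminant $D$ only if every prime dividing it splits, and that then the classes representing $n=p_1\cdots p_k$ are precisely the signed sums $\sum_i\epsilon_i c(p_i)$ with $\epsilon_i=\pm1$, where $c(p_i)\in C(D)$ lies above $p_i$. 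Writing $n=m\,p_1\cdots p_r$ with $m$ collecting the prime factors in $H$ and $p_1,\dots,p_r$ the remaining non-$H$ primes as in \eqref{formgol}, and reducing modulo $H$, the set of residues in $\Z/p_0\Z$ attained by the representing classes of $n$ is the signed sumset $\bar S:=\{\sum_{i=1}^r\epsilon_i\,e(p_i):\epsilon_i=\pm1\}$, the factor $m$ contributing only $0$.

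Since $[f]\in H$ has trivial image in $\Z/p_0\Z$, the crucial observation is that $0\notin\bar S$ already forces $n$ to be exceptional: every class representing $n$ then has nonzero image modulo $H$, so none can equal $[f]$. Conversely, when $0\in\bar S$ one expects $n$ to represent $[f]$ for all but a sparse set of admissible $m$, because the signed sumset contributed by the many prime factors of $m$ sweeps out all of $H$ and can be steered onto $[f]$ exactly; this is the content of the density estimates in Golubeva's lemmas and is where the phrase ``after discarding a sparse set'' is paid for. Thus, up to a sparse set, the exceptional $n$ with a given number $r$ of non-$H$ primes are exactly those whose tuple $(e(p_1),\dots,e(p_r))$ satisfies $0\notin\bar S$.

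It remains to determine the nonzero tuples $(x_1,\dots,x_r)$ in $\Z/p_0\Z$ with $0\notin\bar S$, where $\bar S=\{\sum\epsilon_i x_i\}$. First I would prove the bound $|\bar S|\ge\min(p_0,r+1)$ by the usual one-variable induction: $\bar S_k=(\bar S_{k-1}+x_k)\cup(\bar S_{k-1}-x_k)$ strictly enlarges $\bar S_{k-1}$ unless $\bar S_{k-1}$ is already all of $\Z/p_0\Z$, since $x_k$ generates the group; together with $|\bar S_1|=2$ this gives $|\bar S|\ge r+1$ for $r\le p_0-1$. In particular $r\ge p_0-1$ forces $\bar S=\Z/p_0\Z\ni 0$, which is why the dominant (largest $\log\log$ power) exceptional family has exactly $r=p_0-2$ non-$H$ primes, in agreement with \Cref{goluthm}. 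For $r=p_0-2$ we get $|\bar S|\ge p_0-1$, so $\bar S$ omits at most one residue, and $0\notin\bar S$ forces $|\bar S|=p_0-1$ with that single omitted residue equal to $0$. The heart of the matter is the equality case: $|\bar S|=r+1$ holds if and only if all the $x_i$ are equal up to sign. I would prove this by strengthening the induction to track structure, showing that minimal growth $|\bar S_k|=|\bar S_{k-1}|+1$ forces $\bar S_{k-1}$ to be an arithmetic progression of common difference $2x_1$ symmetric about $0$, and forces $x_k=\pm x_1$ (the translates $\bar S_{k-1}\pm x_k$ overlap in all but one element only when the relative shift $2x_k$ equals $\pm 2x_1$). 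A direct check then confirms that the diagonal tuple $(a,\dots,a)$ yields $\bar S=a\cdot\{-(p_0-2),-(p_0-4),\dots,p_0-2\}=\Z/p_0\Z\setminus\{0\}$, the omitted residue being $0$ precisely because $r=p_0-2$ is odd and no signed sum of an odd number of terms $\pm a$ can vanish.

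Combining these, for $r=p_0-2$ the exceptional integers are, up to a sparse set, exactly the $n=m\,p_1\cdots p_{p_0-2}$ whose non-$H$ classes $e(p_i)$ are all equal up to sign to a common $a\in(\Z/p_0\Z)^\ast$, which is the claimed diagonal description. The direction ``diagonal $\Rightarrow$ exceptional'' is unconditional, with no sparse set discarded, since $0\notin\bar S$ holds identically for such tuples; only the converse uses the density input. I expect the main obstacle to be the equality case of the signed-sumset lemma — the structure theorem forcing arithmetic-progression form and hence the diagonal conclusion — together with making rigorous that the configurations with $0\in\bar S$ contribute only a sparse set of genuinely exceptional integers, the delicate density bookkeeping inherited from Golubeva's argument.
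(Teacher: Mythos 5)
Your proposal is correct in substance, but note that the paper does not actually prove \Cref{case1}: it is stated as a direct consequence of Lemmas 5 and 6 of \cite{golubeva2001exceptional}, so your argument is a genuine reconstruction rather than a parallel of anything in the text. The skeleton you use is the right one: identify $C(D)/H$ with $\Z/p_0\Z$, observe that a squarefree $n=mp_1\cdots p_r$ coprime to $2D$ is represented exactly by the classes $\sum_i\epsilon_i c(p_i)$, so that $n$ is \emph{unconditionally} exceptional for $[f]\in H$ whenever the signed sumset $\bar S$ of the non-$H$ primes misses $0$, and then classify the tuples with $0\notin\bar S$. Your growth bound $|\bar S_k|\ge\min(p_0,k+1)$ is sound (equality $\bar S_{k-1}+x_k=\bar S_{k-1}-x_k$ would make $\bar S_{k-1}$ invariant under translation by $2x_k\neq 0$, forcing $\bar S_{k-1}=\Z/p_0\Z$), and the equality-case analysis does yield the diagonal tuples. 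Two points deserve to be made explicit. First, the step $x_k=\pm x_1$ needs the fact that an arithmetic progression in $\Z/p_0\Z$ of size between $2$ and $p_0-2$ has a unique common difference up to sign; this is true, and the degenerate size $p_0-1$ (where every difference works) never occurs since $|\bar S_{k-1}|=k\le p_0-2$ in your induction. Second, the proposition describes \emph{all} exceptional integers up to a sparse set, so the families with $r<p_0-2$ (genuinely exceptional, e.g.\ already for $r=1$, but of size $O(x(\log\log x)^{p_0-3}/(\log x)^{1-|H|/(2h)})$ by \Cref{gollemma1}, hence negligible) and with $r\ge p_0-1$ (where $0\in\bar S$ always) must be explicitly absorbed into the discarded set; you gesture at this but should say it.

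The one real gap is the converse direction: that $0\in\bar S$ forces $n$ to be represented by $f$ itself outside a sparse set. This requires showing that for all but $o(\text{main term})$ of the admissible $m$, the signed sumset of the $H$-classes of the prime factors of $m$ is all of $H$, so that the $H$-component of a representing class can be steered onto $[f]$ exactly. You correctly identify this as the content of Golubeva's density lemmas and outsource it; since the paper outsources the entire proposition to the same source, this matches the paper's level of rigour, but a self-contained proof would still have to supply that estimate.
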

    \begin{prop}
    \label[prop]{case2}
        Let $[f] \not\in H$ and let $p_0$ be defined as above. Write $[f] = [f^\star]^{e^\star}$. The squarefree integers coprime to $2D$ which are exceptional for $f$ (after discarding a sparse set) are those of the form
    $$mp_1p_2\cdots p_{p_0-3},$$
    where each prime divisor of $m$ is represented by a form in $H$, and $p_i$ is represented by the class of forms $[f_i] = [f^\star]^{e_i}$. The set of choices for $(e_1,e_2,\cdots,e_{p_0-3})$ being the set of tuples $(a,a,\cdots,a),\; a \in (\Z/p_0\Z)^\ast$, with the additional condition $a \not \equiv e^\star \bmod p_0$.
    \end{prop}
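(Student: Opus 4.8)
The plan is to derive Proposition~\ref{case2} from the ideal-theoretic description of representation together with Golubeva's Lemmas 2--6, by transporting the whole question into the quotient group $C(D)/H \cong \Z/p_0\Z$. First I would record the representation criterion. Since $h$ is odd, squaring is an automorphism of $C(D)$, so $G(D) \cong C(D)/C(D)^2$ is trivial and there is a single genus; hence for squarefree $n$ coprime to $2D$, ``represented by some form in the genus of $f$'' is the same as ``$n$ is the norm of an ideal'', i.e.\ every prime dividing $n$ splits in $\O_K$. Writing such an $n = \ell_1\cdots \ell_s\, p_1\cdots p_r$ and factoring each rational prime as $p\O_K = \mathfrak{p}\bar{\mathfrak{p}}$, the classes representing $n$ are exactly the signed products $\prod_j [\mathfrak{l}_j]^{\delta_j}\prod_i [\mathfrak{p}_i]^{\epsilon_i}$ with $\delta_j,\epsilon_i \in \{\pm 1\}$. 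Thus $n$ is exceptional for $f$ precisely when $[f]$ is \emph{not} of this form. I would take the $\ell_j$ to be the primes whose class lies in $H$ (so $m=\ell_1\cdots\ell_s$) and the $p_i$ to be the remaining split primes, whose classes $[\mathfrak{p}_i]=[f^\star]^{e_i}$ satisfy $e_i\not\equiv 0 \bmod p_0$.

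Next I would project via $\pi:C(D)\to C(D)/H\cong \Z/p_0\Z$, under which $[f^\star]\mapsto 1$ and every $[\mathfrak{l}_j]\mapsto 0$. Applying $\pi$ shows that a necessary condition for $[f]$ to be represented is $e^\star \equiv \sum_i \epsilon_i e_i \pmod{p_0}$ for some choice of signs. Conversely, once a sparse set is discarded, the signed products $\prod_j[\mathfrak{l}_j]^{\delta_j}$ sweep out all of $H$ (this is exactly where Golubeva's Lemmas are invoked: generically $m$ has enough prime factors in a class generating the cyclic group $H$, and the deficient configurations contribute only to the sparse set, controllable through the density $\tfrac{1}{2p_0}$ of $H$-primes that already governs the main term). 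Hence, away from the sparse set, $n$ is exceptional for $f$ if and only if $e^\star$ avoids the signed-sum set $S(e_1,\dots,e_r):=\{\sum_i \epsilon_i e_i : \epsilon\in\{\pm1\}^r\}\subseteq \Z/p_0\Z$.

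The remaining content is additive-combinatorial in $\Z/p_0\Z$, and this is the heart of the matter. Each factor $\{e_i,-e_i\}$ has two elements because $e_i\neq 0$ and $p_0$ is odd, so by Cauchy--Davenport $|S|=|\{e_1,-e_1\}+\cdots+\{e_r,-e_r\}|\ge \min(p_0,r+1)$. For the diagonal multiset $e_1=\cdots=e_r=a$ one computes $S=\{(r-2j)a:0\le j\le r\}$, an arithmetic progression of length $r+1$. To omit a target at all one needs $r+1\le p_0-1$; omitting a \emph{nonzero} target $e^\star$ (as opposed to omitting only $0$, which is the case $[f]\in H$ of Proposition~\ref{case1}) forces the symmetric progression to be one step shorter, hence $r=p_0-3$ and the extremal diagonal shape. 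I would then read off the admissible $a$ by writing out the two residues that the length-$(p_0-2)$ progression $\{(r-2j)a\}$ misses and requiring $e^\star$ to be among them; combined with the identification $[\mathfrak{p}]\leftrightarrow[\mathfrak{p}]^{-1}$ (so that $a$ and $-a$ describe the same primes, per the Remark), this singles out the stated diagonal tuples.

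The main obstacle I anticipate is the rigidity step: not merely the Cauchy--Davenport lower bound but its \emph{equality case}, namely that the only tuples attaining the maximal $r=p_0-3$ with $e^\star\notin S$ are diagonal. Here the Vosper-type characterization of equality in Cauchy--Davenport is needed to conclude that all two-element sets $\{e_i,-e_i\}$ must be arithmetic progressions with a common difference, which forces $e_i=\pm e_{i'}$ for all $i,i'$. Everything else is bookkeeping: matching the $\pm$ ambiguity in the class of a split prime to the parametrisation by $a\in(\Z/p_0\Z)^\ast$, and verifying that tuples which are non-extremal, or which carry more than $p_0-3$ non-$H$ factors, either fail to be exceptional or are absorbed into the sparse set. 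Because Golubeva's Lemmas 5 and 6 already encode this extremal analysis, the proof is essentially a dictionary between their statement and the signed-sum description above; the one point demanding genuine care is pinning down exactly which residues $a$ survive, since $e^\star$ enters only through the endpoints of the progression $S$.
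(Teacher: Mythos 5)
Your overall strategy is sound, and it in fact supplies more than the paper does: the paper gives no proof of \Cref{case2}, asserting only that it follows from Lemmas 5 and 6 of Golubeva's paper, so your reconstruction via the ideal-theoretic representation criterion, the projection to $C(D)/H\cong\Z/p_0\Z$, and the Cauchy--Davenport/Vosper rigidity is a legitimate way to fill that in. The reduction ``$n$ is exceptional iff $e^\star\bmod p_0$ lies outside the signed-sum set $S$'' is correct once the sparse set is discarded (that is where Golubeva's earlier lemmas are genuinely needed, to make the $H$-part sweep out all of $H$), and the rigidity argument correctly forces $r=p_0-3$ and the diagonal shape.

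The problem is the very last step, which you assert rather than compute, and which does not yield the statement you are proving. For the diagonal tuple $(a,\dots,a)$ with $r=p_0-3$ one has $S=\{(r-2j)a:0\le j\le r\}=\Z/p_0\Z\setminus\{a,-a\}$: the integers $r-2j$ run over $2t$ with $|t|\le(p_0-3)/2$, omitting $t=\pm(p_0-1)/2$, i.e.\ omitting $2t\equiv\mp1\bmod p_0$. Your own criterion, that $e^\star$ must be among the two missed residues, therefore reads $e^\star\in\{a,-a\}$, i.e.\ $a\equiv\pm e^\star\bmod p_0$ --- a single tuple up to the $[f_i]\leftrightarrow[f_i]^{-1}$ identification --- whereas \Cref{case2} asserts that the admissible $a$ are all of $(\Z/p_0\Z)^\ast\setminus\{e^\star\}$. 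These are not the same set. Concretely, take $p_0=5$, $e^\star=1$, $a=2$: then $S=\{2a,0,-2a\}=\{0,1,4\}$ contains $e^\star$, so such $n$ are represented by $f$ and are \emph{not} exceptional, yet the proposition admits $a=2$; conversely $a=1=e^\star$ gives $S=\{0,2,3\}$, which omits $e^\star$, so those $n$ \emph{are} exceptional, yet the proposition excludes $a=1$. So either your dictionary to Golubeva's Lemma 6 is missing something, or the condition in the statement is misstated; as written, your argument proves a different classification, and you cannot claim it ``singles out the stated diagonal tuples'' without confronting this discrepancy. Note that the identical computation applied to \Cref{case1} ($r=p_0-2$, $S=\Z/p_0\Z\setminus\{0\}$, every $a\ne0$ admissible) reproduces that statement exactly, which suggests your method is right and the issue lies in the final bookkeeping for \Cref{case2}.
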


\Cref{goluthm} then follows from a lemma on counting integers of the form \eqref{formgol} for each individual tuple $([f_1], \cdots, [f_r])$ which is suitable, and then summing the results over all the distinct tuples:

\begin{lemma}
\label{gollemma1}
    Let $C(D)$ be cyclic of odd order $h$. Let $H \subset C(D)$ be a proper subgroup. Fix $[f_1],\dots, [f_r]$, (not necessarily distinct) classes of forms not belonging to $H$. We wish to count integers of the form 
        \begin{equation}
        \label{form gol alt}
        \begin{split}
            &n = mp_1\cdots p_r,\\ &p|m \implies  p \textit{ represented by a form }f \in H,\\
            &p_j \textit{ represented by the class } [f_j].
        \end{split}
        \end{equation}
     Let us define $S_{[f_1],\cdots [f_r]}(x,1,1) = S(x)$ by
    \begin{align*}
        S(x) := \#\{n \leq x: n \textit{ squarefree, }(n,2D)=1, n \textit{ satisfying }\eqref{form gol alt}\}.
    \end{align*}
    Then we have, as $x \to \infty$,
    \begin{equation}
    \label{golulemma1eq}
       S(x) = A(f_1,\cdots,f_r) \frac{x}{(\log x)^{1-|H|/(2h)}}(\log \log x)^r(1+o(1)),
    \end{equation}
    where $A(f_1,\cdots,f_r)$ is a constant depending only on the tuple $[f_1],\cdots, [f_r]$.
\end{lemma}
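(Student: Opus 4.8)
The plan is to compute the Dirichlet series
$$F(s) = \sum_{n} a(n)\, n^{-s},$$
where $a(n)$ is the indicator of the integers counted by $S(x)$, determine its behaviour at $s=1$, and then feed this into the Selberg--Delange machinery behind \Cref{LSD}. First I would record the arithmetic of representations. Since $n$ is squarefree, coprime to $2D$, and a norm, every prime $p\mid n$ splits in $K$ (an inert or ramified prime cannot divide such an $n$), and a split prime is represented precisely by the pair of ideal classes $\{[\mathfrak p],[\mathfrak p]^{-1}\}$; as $h$ is odd, the identity is the only self-inverse class. Using the characters of $C(D)$ trivial on $H$ (equivalently, characters of $C(D)/H$), the indicator that a split prime is represented by a form in $H$ is a linear combination of $\chi([\mathfrak p])$, and likewise for each class $[f_j]$. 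Each nontrivial such $\chi$ gives a Hecke $L$-function over $K$ that is entire and nonvanishing on $\Re s = 1$, so the counting functions of primes represented by $H$ (resp.\ by $[f_j]$) obey a prime number theorem with power-saving error. In particular the density of primes represented by a form in $H$ is $\alpha := |H|/(2h)$, while the density of primes represented by a fixed nontrivial class $[f_j]$ is $1/h$.

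Next I would factor $F(s)$. Writing $G(s) = \prod_{p \text{ rep.\ by } H,\, p \nmid 2D}(1 + p^{-s})$ for the generating series of the admissible squarefree $m$, and $P_j(s) = \sum_{p \text{ rep.\ by } [f_j],\, p\nmid 2D} p^{-s}$ for the special primes, the product $G(s)\prod_{j=1}^r P_j(s)$ expands into the integers $m p_1\cdots p_r$ of the desired shape, except that it also includes degenerate terms in which two chosen special primes coincide (possible only when $[f_j]=[f_{j'}]^{\pm1}$). Because a prime represented by $[f_j]\notin H$ is never represented by a class of $H$, no special prime can divide $m$, so the only correction is to remove the diagonal contributions of type $\sum_p p^{-2s}$, which are holomorphic for $\Re s > 1/2$ and hence do not affect the leading singularity. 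After dividing by the symmetry factor $\sigma$ coming from repeated classes among the $[f_j]$, I obtain
$$F(s) = \frac{1}{\sigma}\,G(s)\prod_{j=1}^r P_j(s) + H(s),$$
with $H$ regular near $s=1$. From the $L$-function input, $\log G(s) = \alpha\log\frac{1}{s-1} + O(1)$ and $P_j(s) = \tfrac1h\log\frac1{s-1} + O(1)$ as $s\to1^+$, so near $s=1$
$$F(s) = (s-1)^{-\alpha}\Big(\log\tfrac1{s-1}\Big)^r\,\big(c + o(1)\big)$$
for an explicit positive $c$, together with the analytic continuation and polynomial-growth bounds in a standard slit domain that these $L$-functions supply.

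Finally I would convert this singularity into the asymptotic for $S(x)$. The factor $(s-1)^{-\alpha}$ alone yields, by Selberg--Delange, a main term of order $x(\log x)^{\alpha-1}$; the extra $(\log\frac1{s-1})^r$ is exactly what arises from differentiating $(s-1)^{-\alpha}$ in the exponent $\alpha$ a total of $r$ times, and under the contour integral this produces the factor $(\log\log x)^r$. Concretely I would either invoke the more general form of the Selberg--Delange method that permits a factor $(\log\frac1{s-1})^r$, or introduce an auxiliary exponent and differentiate $r$ times, arriving at
$$S(x) = \frac{c}{\Gamma(\alpha)}\,\frac{x\,(\log\log x)^r}{(\log x)^{1-\alpha}}\,(1+o(1)),$$
which is the claim with $\alpha = |H|/(2h)$ and $A(f_1,\dots,f_r) = c/\Gamma(\alpha)>0$.

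The main obstacle is this last step: rigorously extracting the $(\log\log x)^r$ factor from the combined singularity $(s-1)^{-\alpha}(\log\frac1{s-1})^r$, which needs a Selberg--Delange statement slightly more general than \Cref{LSD} and careful uniformity when the $r$ special primes are summed. A secondary technical point is checking the analytic continuation and growth bounds for the $L$-functions attached to the characters of $C(D)$ trivial on $H$, so that the contour may be deformed as the method requires.
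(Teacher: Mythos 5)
Your argument is essentially correct, but it takes a genuinely different route from the paper's. The paper does not reprove this lemma: it is quoted from Golubeva \cite{golubeva2001exceptional}, and the method there (which the paper reproduces when proving the arithmetic-progression version, \Cref{gollemma1ap}) is elementary rather than contour-based. One applies Wirsing's theorem to the multiplicative function $\nu_H$, whose prime sums have density $\kappa=|H|/(2h)$ by Chebotarev, to get $\sum_{m\le x}\nu_H(m)\sim A_4\,x/(\log x)^{1-|H|/(2h)}$, and then performs a hyperbola-style splitting of $n=m\,p_1\cdots p_r$ according to whether $p_1\cdots p_r\le\sqrt{x}$ or $m\le\sqrt{x}$; the factor $(\log\log x)^r$ then comes from the Mertens estimate $\sum_{p\le y,\ p\ \text{repr.\ by } [f_i]}1/p\sim\frac{1}{h}\log\log y$ applied $r$ times, not from a logarithmic singularity of a Dirichlet series (this is \eqref{golu final eq}). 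Your route---isolating the singularity $(s-1)^{-\alpha}\bigl(\log\frac{1}{s-1}\bigr)^r$ of $F(s)$ and applying a tauberian theorem---does work and yields the cleaner closed form $A=c/\Gamma(\alpha)$, but it requires a Selberg--Delange statement admitting logarithmic factors, which is strictly stronger than \Cref{LSD} as quoted in the paper; you correctly identify this as the main obstacle, and it can indeed be supplied (e.g.\ by differentiating $r$ times in an auxiliary exponent, or by the standard variants of the method allowing $\log$-power singularities), whereas the Wirsing route avoids the issue entirely. Two small corrections to your write-up: the error term coming from the Hecke $L$-functions attached to characters trivial on $H$ is of de la Vall\'ee Poussin type, not power-saving (this is all the method needs); and your remainder $H(s)$ is not regular at $s=1$---the diagonal corrections removing the terms with $p_i=p_j$ still carry the factor $G(s)\asymp(s-1)^{-\alpha}$---it merely has at most $r-2$ logarithmic factors, which is the actual reason it is negligible against the main term.
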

As stated in \Cref{case1} and \Cref{case2}, there is a finite set of tuples $(f_1,\cdots,f_r)$ for which an integer $n$ of the form of \eqref{form gol alt} can be exceptional for $f$. Summing \eqref{golulemma1eq} over all such \textit{distinct}\footnote{In the sense of the remark preceding \Cref{case1} amd \Cref{case2}.} tuples yields \Cref{goluthm} for squarefree integers coprime to $2D$. The general theorem follows by noting that the inclusion of all integers only changes the asymptotic by a constant factor. See \Cref{sectremovesf} for a comment on the removal of the ``squarefree'' condition.

 \subsubsection{Modified Golubeva Lemma and its Proof}
 We consider now the behaviour of the exceptional integers in arithmetic progressions, and we obtain the following generalization of \Cref{gollemma1}: 

\begin{lemma}
\label{gollemma1ap}
   Let $q$ be a chosen modulus, $(q,2D) = 1$. Let $H$, $[f_1],\cdots,[f_r]$ be as in \Cref{gollemma1}. Let us define $S_{[f_1],\cdots [f_r]}(x,q,a) = S(x,q,a)$ by $$S(x,q,a) := \#\{n \leq x: n \textit{ squarefree, }(n,2D)=1, n \equiv a \bmod q, n \textit{ satisfying }\eqref{form gol alt}\}.$$ Then, for $(a,q) = 1$, one has, as $x \to \infty$,

    \begin{equation}
    \label{golulemmaapeq}
       S(x,q,a)= \frac{1}{\phi(q)}\prod_{p|q}\left(1+\frac{\nu_H(p)}{p} \right)^{-1} S(x)(1+o(1)),
    \end{equation}
    where $$\nu_H(n) := \begin{cases} 1, n \textit{ squarefree, }(n,2D) = 1, p|n \implies  p \textit{ represented by a form } f \in H,\\
    0, \textit{ otherwise.}\end{cases}$$
\end{lemma}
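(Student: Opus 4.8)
The plan is to detect the congruence $n \equiv a \bmod q$ with the orthogonality of Dirichlet characters and to show that only the principal character contributes to the main term. Writing
$$S(x,q,a) = \frac{1}{\phi(q)} \sum_{\chi \bmod q} \overline{\chi}(a)\, S_\chi(x), \qquad S_\chi(x) := \sum_{\substack{n \le x,\ n \text{ squarefree}\\ (n,2D)=1,\ n \text{ satisfies } \eqref{form gol alt}}} \chi(n),$$
the problem splits into two tasks: evaluating $S_{\chi_0}(x)$ for the principal character $\chi_0$, and showing $S_\chi(x) = o(S(x))$ for every non-principal $\chi$. Since $(a,q)=1$, once the non-principal terms are negligible we are left with $\frac{1}{\phi(q)} S_{\chi_0}(x)(1+o(1))$, and it remains to identify $S_{\chi_0}(x)$ with the claimed multiple of $S(x)$.

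For the principal character, $S_{\chi_0}(x)$ counts exactly the integers of \Cref{gollemma1} subject to the extra condition $(n,q)=1$. Factoring $n = m p_1 \cdots p_r$ as in \eqref{form gol alt}, the constraint $(n,q)=1$ affects the two pieces differently. Each $p_j$ ranges over the infinitely many primes represented by the fixed class $[f_j]$; excluding the finitely many primes dividing $q$ removes only $O(1)$ terms from each sum $\sum_{p_j} 1/p_j$ and so does not disturb the $(\log\log x)^r$ factor. Forcing $(m,q)=1$, by contrast, deletes, for each $p \mid q$ with $\nu_H(p)=1$, the local Euler factor $(1+p^{-s})$ from the Dirichlet series generating the $m$-part. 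Since the ratio $\prod_{p\mid q}(1+p^{-s})^{-\nu_H(p)}$ is holomorphic and non-vanishing at $s=1$, the Landau--Selberg--Delange analysis underlying \Cref{gollemma1} (that is, \Cref{LSD}) shows that it multiplies the leading asymptotic by its value $\prod_{p\mid q}(1+\nu_H(p)/p)^{-1}$ at $s=1$. Hence $S_{\chi_0}(x) = \prod_{p\mid q}(1+\nu_H(p)/p)^{-1}\, S(x)(1+o(1))$, which is the desired constant.

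The heart of the argument is the vanishing of the non-principal contributions. Writing $\chi(n) = \chi(m)\prod_j \chi(p_j)$, I would run the same counting scheme used for \Cref{gollemma1}, but carry the twist $\chi$ through each prime sum. The key input is that the primes represented by a fixed class of forms are equidistributed among the residue classes modulo $q$: because $(q,2D)=1$, the Hilbert class field $H_K$ and the cyclotomic field $\Q(\zeta_q)$ are linearly disjoint over $\Q$, so Chebotarev's theorem in the compositum decouples the condition ``$p$ is represented by $[f_j]$'' from ``$p \equiv b \bmod q$''. Consequently $\sum_{p \le y,\ p \text{ rep } [f_j]} \chi(p)\log p = o(y)$ and $\sum_{p \le y,\ p \text{ rep } [f_j]}\chi(p)/p = o(\log\log y)$ for $\chi \ne \chi_0$, while the analogous twisted $m$-sum has Selberg--Delange parameter $\kappa = 0$ and is therefore of strictly smaller order than $M(y) \asymp y/(\log y)^{1-|H|/(2h)}$. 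Feeding these cancellation estimates into the multilinear structure $S_\chi(x) = \sum_{p_1,\dots,p_r} \chi(p_1)\cdots\chi(p_r)\, M_\chi(x/(p_1\cdots p_r))$ --- by induction on $r$ or by peeling off one factor at a time --- should yield $S_\chi(x) = o(S(x))$.

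The main obstacle is precisely this last step: converting the pointwise cancellation of each individual prime sum into a bound on the full $(r+1)$-fold convolution with adequate uniformity in the truncation variable $x/(p_1\cdots p_r)$, while respecting the $(\log\log x)^r$ size of the unweighted count. One must check that the cancellation in any single factor --- the twisted $m$-part when $\chi\neq\chi_0$, or one of the $\chi(p_j)$ sums --- survives summation over the remaining $r$ ranges rather than being overwhelmed by their accumulated $\log\log x$ growth. I expect this to require an error term in the equidistribution of primes by class and residue that is uniform enough (a power saving, or at least a saving of size $o(1/\log\log)$) to be summed against the remaining ranges, together with careful bookkeeping to confirm that the surviving $\chi_0$ term reproduces $S(x)$ exactly, with the coprimality correction and no spurious loss in the $(\log\log x)^r$ factor.
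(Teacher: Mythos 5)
Your overall strategy---orthogonality of Dirichlet characters, Chebotarev in the compositum of the Hilbert class field with $\Q(\zeta_q)$ to decouple the class-of-forms condition from the residue condition, and an Euler-factor correction $\prod_{p\mid q}(1+\nu_H(p)/p)^{-1}$ accounting for the coprimality of the $m$-part---uses the same ingredients as the paper, and your treatment of the principal character is essentially the paper's (the paper evaluates the $m$-part with Wirsing's theorem rather than Selberg--Delange, but that is cosmetic). The genuine gap is the one you flag yourself: by applying orthogonality to the full variable $n=mp_1\cdots p_r$, you are left needing cancellation in each twisted prime sum $\sum\chi(p_j)$ with enough uniformity to survive summation over the remaining $r$ ranges, and you do not carry this out. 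That route is harder than necessary, and your proposal stops exactly where the work begins.

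The paper's way around it is a different decomposition that makes the $p_j$'s inert. Since $\chi(n)=\chi(p_1\cdots p_r)\chi(m)$, the congruence $n\equiv a\bmod q$ becomes, for each fixed product $p_1\cdots p_r$ coprime to $q$, the congruence $m\equiv a(p_1\cdots p_r)^{-1}\bmod q$: no equidistribution of the $p_j$ modulo $q$ is ever needed. Concretely, $S(x,q,a)$ is split according to whether $p_1\cdots p_r\le\sqrt{x}$ or not. The complementary range is bounded trivially, with no congruence information, by $O\bigl(x(\log\log x)^{r-1}/(\log x)^{1-|H|/(2h)}\bigr)$, smaller than the main term by a factor of $\log\log x$. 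On the main range one has $x/(p_1\cdots p_r)\ge\sqrt{x}$, so the $m$-sum in the progression $a(p_1\cdots p_r)^{-1}\bmod q$ may be replaced by $\frac{1}{\phi(q)}\prod_{p\mid q}(1+\nu_H(p)/p)^{-1}\sum_m\nu_H(m)$ with error $O\bigl((x/(p_1\cdots p_r))/(\log\sqrt{x})^{A}\bigr)$, uniformly in the length (this is where \Cref{lemma CDT ap} and the twisted $m$-sum bound enter); summing $1/(p_1\cdots p_r)$ over the range costs only $(\log\log x)^r$, so any fixed large $A$ suffices. If you wish to keep your global-orthogonality framing, the same two-range decomposition gives $S_\chi(x)\ll x(\log\log x)^r/(\log x)^{A}+x(\log\log x)^{r-1}/(\log x)^{1-|H|/(2h)}$ for $\chi\ne\chi_0$ using only the twisted $m$-sum estimate, with no cancellation required in the $\chi(p_j)$ sums.
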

To prove \eqref{golulemmaapeq}, we will need  to know about the distribution of \textit{primes }represented by a given quadratic form in an arithmetic progression. The needed information is contained in the following lemma.
\begin{lemma}
    \label{lemma CDT ap}
    Let $q$ be a chosen modulus, $(q,2D) = 1$. Fix a binary quadratic form $f$ of discriminant $D$. We have
    \begin{equation}
    \label{lemma CDT ap eq}
        \sum_{\substack{p \leq x\\ p \equiv a \bmod q}}1_f(p) =\frac{1}{\phi(q)} \frac{\delta(f)}{h} li(x) +O(x\exp{(- c \sqrt{\log x})}),
    \end{equation}
    for some positive constant $c$, where $1_f(n) = 1$ if $n$ is represented by $f$ and $1_f(n) = 0$ otherwise, and
    $$\delta(f) = \begin{cases} \frac{1}{2}, \quad & \textit{ if the class containing } f \textit{ has order } \leq 2 \textit{ in } C(D)\\
    1, & \textit{ otherwise.}\end{cases}$$
\end{lemma}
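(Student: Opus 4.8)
The plan is to translate the representation condition into ideal-class language and then run a two-variable orthogonality argument: one family of characters to pin down the ideal class of the prime above $p$, and a second family (Dirichlet characters modulo $q$) to pin down the residue of $p$. Recall the standard correspondence between classes of forms of discriminant $D$ and the ideal class group $C(D)$: up to the negligibly many ramified primes, a prime $p$ is represented by $f$ precisely when $(D|p)=1$ and one of the two primes $\mathfrak{p},\bar{\mathfrak{p}}$ above $p$ lies in the class $[f]$, i.e. $[\mathfrak{p}]\in\{[f],[f]^{-1}\}$. When $[f]$ has order $>2$ these two classes are distinct, so each represented $p$ corresponds to exactly one degree-one prime ideal in the class $[f]$; when $[f]$ has order $\leq 2$ one has $[f]=[f]^{-1}$, so both primes above $p$ lie in $[f]$ and each represented $p$ corresponds to two such ideals. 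This is exactly the source of the factor $\delta(f)$: writing $\Pi(x)$ for the number of degree-one prime ideals $\mathfrak{p}$ with $N\mathfrak{p}\leq x$, $N\mathfrak{p}\equiv a\bmod q$, and $[\mathfrak{p}]=[f]$, one gets $\sum_{p\leq x,\,p\equiv a}1_f(p)=\delta(f)\,\Pi(x)$, so it suffices to estimate $\Pi(x)$.

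First I would detect both conditions by orthogonality, using the characters $\chi\in\widehat{C(D)}$ of the class group together with the Dirichlet characters $\eta\bmod q$:
\begin{equation*}
\Pi(x)=\frac{1}{h\,\phi(q)}\sum_{\chi\in\widehat{C(D)}}\sum_{\eta\bmod q}\overline{\chi([f])}\,\overline{\eta(a)}\sum_{\substack{N\mathfrak{p}\leq x\\ \deg\mathfrak{p}=1}}\chi(\mathfrak{p})\,\eta(N\mathfrak{p}),
\end{equation*}
where prime ideals of degree $\geq 2$ may be absorbed into an $O(\sqrt{x})$ error. The key observation is that each inner sum is a prime sum attached to the Hecke character $\mathfrak{a}\mapsto\chi(\mathfrak{a})\,\eta(N\mathfrak{a})$ of $K$, whose conductor divides $q\mathcal{O}_K$ (the class character being unramified). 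This product character is trivial exactly when $\chi=\chi_0$ and $\eta=\eta_0$: if $\chi=\chi_0$ but $\eta\neq\eta_0$, then since $(q,D)=1$ the split primes realise every residue class coprime to $q$ as a norm, so $\eta\circ N$ remains non-principal.

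Then I would invoke the prime ideal theorem for Hecke $L$-functions with the classical error term. For the principal pair $(\chi_0,\eta_0)$ the inner sum counts degree-one prime ideals of norm $\leq x$ coprime to $q$; since split primes number $\sim\tfrac12\mathrm{li}(x)$ and each contributes two degree-one ideals, this is $\mathrm{li}(x)+O(x\exp(-c\sqrt{\log x}))$. For every other pair the associated Hecke $L$-function is entire, and its classical zero-free region $\sigma>1-c/\log(\cdots)$ yields, via Perron's formula and a standard contour shift, an inner sum of size $O(x\exp(-c\sqrt{\log x}))$. Retaining only the surviving term gives $\Pi(x)=\frac{1}{h\phi(q)}\mathrm{li}(x)+O(x\exp(-c\sqrt{\log x}))$, and multiplying by $\delta(f)$ yields \eqref{lemma CDT ap eq}.

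The main obstacle is purely analytic: establishing the zero-free region, and hence the exponential error term, for the twisted Hecke $L$-functions $L(s,\chi\cdot(\eta\circ N))$. The one genuine subtlety is the possible Landau--Siegel zero for the real characters in the family (those with $\chi^2=\chi_0$ and $\eta^2=\eta_0$), which would make the constant $c$ ineffective; but since $D$ and $q$ are fixed this is harmless and may be absorbed into the implied constant. Everything else---that $\chi\cdot(\eta\circ N)$ is a genuine Hecke character, the entireness of its $L$-function for non-principal parameters, and the prime ideal theorem itself---is classical.
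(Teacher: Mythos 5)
Your proof is correct and is essentially the argument the paper gives: the paper applies an effective Chebotarev density theorem to $H[\zeta_q]/K$ (with $H$ the Hilbert class field), using $H\cap K[\zeta_q]=K$ to split $\mathrm{Gal}(H[\zeta_q]/K)\cong C(D)\times(\Z/q\Z)^*$, which is precisely the dual formulation of your double orthogonality over class-group characters and Dirichlet characters, and the non-principality of your product characters $\chi\cdot(\eta\circ N)$ is the same linear-disjointness fact. Your accounting of the factor $\delta(f)$ via the two-to-one versus one-to-one correspondence between represented primes and degree-one prime ideals in the class $[f]$ matches the paper's final step exactly.
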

\begin{remark*}
    It is important to recall that $1_f(p) = 1_{f'}(p)$ for all $f' \in [f]$, and that a prime number $p$ is uniquely represented by a class of forms $[f]$, and the inverse of that class of forms, $[f]^{-1}$. 
\end{remark*}
\begin{proof}[Proof of \Cref{lemma CDT ap}]
    Let $K$ be the imaginary quadratic field of discriminant $D$, and let $H$ be the Hilbert class field of $K$, so that $Gal(H/K) \cong C(D)$. For a given class $\mathcal{C} \in C(D)$, we wish to find an asymptotic for the size of the set
    $$S_{\mathcal{C},a}:= \{\mathfrak{p} \in \mathcal{O}_K : N(\mathfrak{p})\leq x, N(\mathfrak{p}) \equiv a \bmod q, \mathfrak{p} \in \mathcal{C}\}.$$
    The prime ideals in the set $S_{\mathcal{C},a}$ satisfy two Chebatorev conditions simultaneously, and we may apply the Chebatorev density theorem to count them. First, let $\zeta_q = \zeta = e^{2 \pi i /q}$, and consider the following lattice of extensions:
    \begin{center}
        \begin{tikzpicture}
    \node (Q0) at (0,-2) {$\mathbb{Q}$};
    \node (Q1) at (0,0) {$K$};
    \node (Q2) at (2,2) {$K[\zeta]$};
    \node (Q3) at (0,4) {$H[\zeta]$};
    \node (Q4) at (-2,2) {$H$};

    \draw (Q0)--(Q1) node [pos=0.7, right,inner sep=0.25cm] {2};
    \draw (Q1)--(Q2) node [pos=0.7, below,inner sep=0.25cm] {$\phi(q)$};
    \draw (Q1)--(Q4) node [pos=0.7, below,inner sep=0.25cm] {$h$};
    \draw (Q3)--(Q4);
    \draw (Q2)--(Q3);
    \draw (Q1)--(Q3);
    \end{tikzpicture}
    \end{center}
    Note that as long as $h > 1$, then $Gal(K[\zeta]/K) \cong (\Z/q\Z)^*$, since $\zeta^b \not \in K$ for any integer $b$ satisfying $(b,q) = 1$ as $K$ does not contain any $q^{th}$ roots of unity other than $\pm1$.\footnote{In fact, the claim holds as long as $K \neq \Q(\sqrt{m})$ for $m \in \{-1,-3\}$. Either way, we have already dealt with the $h=1$ case in Theorem 1.} Since $H$ is the maximal unramified abelian extension of $K$, and $K[\zeta]/K$ is a finite Galois extension where $q$ is completely ramified, then $H \cap K[\zeta] = K$, and as such we have an isomorphism
    \begin{align}
    \label{gal iso lemm CDT AP}
        \begin{split}
            Gal(H[\zeta]/K) &\cong Gal(H/K)\times Gal(K[\zeta]/K)\\
        \sigma &\to (\sigma|_H,\sigma|_{K[\zeta]})\\
        Art(\mathfrak{p},H[\zeta]/K) &\to (Art(\mathfrak{p},H/K),Art(\mathfrak{p},K[\zeta]/K)),
        \end{split}
    \end{align}
    where $Art$ denotes the Artin symbol. Applying an effective Chebatorev density theorem to the extension $H[\zeta]/K$ tells us that for any given $\sigma \in Gal(H[\zeta]/K)$, one has
    $$|\{\mathfrak{p} \in \mathcal{O}_K : N(\mathfrak{p})\leq x, Art(\mathfrak{p},H[\zeta]/K) = \sigma\}| = \frac{1}{h \phi(q)} Li(x) + O(x\exp{(- c \sqrt{\log x})}).$$
    Note that the ramified primes contribute only $O(1)$ to our count, and are included in the error term. On the other hand, by the isomorphism of \eqref{gal iso lemm CDT AP}, for any given $(\mathcal{C},a) \in C(D) \times (\Z/q\Z)^* \cong Gal(H/K) \times Gal(K[\zeta]/K)$, we can find a $\sigma \in Gal(H[\zeta]/K)$ which is mapped to this pair. With this choice of $\sigma$ in the above equation, one gets
     \begin{align*}
         |S_{\mathcal{C},a}| &=|\{\mathfrak{p} \in \mathcal{O}_K : N(\mathfrak{p})\leq x, Art(\mathfrak{p},H/K) = \mathcal{C}, Art(\mathfrak{p},K[\zeta]/K) = a\}|\\ &= \frac{1}{h \phi(q)} Li(x) + O(x\exp{(- c \sqrt{\log x})}).
     \end{align*}
     Above, $c$ is some positive constant which can be computed. In fact, in our case one can take $c =(99 * \sqrt{h *\phi(q)})^{-1}$ (see, for example, \cite[Thm 1.1]{winckler2013th},  for details on the computation of such a constant).
     To finish the proof, define the sets
     $$S'_{\mathcal{C},a}=\{\mathfrak{p} \in \mathcal{O}_K: \mathfrak{p}\overline{\mathfrak{p}} = p, p \equiv a \bmod q,\:\mathfrak{p} \in \mathcal{C}\},$$
     $$S''_{\mathcal{C},a}=\{p \in \Q, p \equiv a \bmod q,\:p=N(\mathfrak{p}),\:\mathfrak{p} \in \mathcal{C}\}.$$
     We note that
     $$|S'_{\mathcal{C},a}| = |S_{\mathcal{C},a}| + O(x^{1/2}),$$
     $$|S''_{\mathcal{C},a}|= \sum_{\substack{p \leq x\\p \equiv a \bmod q}}1_f(n) + O(x^{1/2}), \textit{ for any } f \in \mathcal{C}.$$
      The map $\mathfrak{p} \to N(\mathfrak{p})$ induces a correspondence between $S'_{\mathcal{C},a}$ and $S''_{\mathcal{C},a}$ that is two-to-one if $\mathcal{C} = \mathcal{C}^{-1}$ in $C(D)$. Otherwise, the map is one-to-one. This introduces the factor $\delta(f)$ into our final count for the size of $S''_{\mathcal{C},a}$.
\end{proof}
\begin{proof}[Proof of \Cref{gollemma1ap}]
     The function $\nu_{H}$ is multiplicative by definition. We claim that for $(q,2D)$ = 1, one has
    \begin{equation}
    \label{nu fn ap eq}
        \sum_{\substack{n \leq x\\n \equiv a \bmod q}} \nu_H(n) = \frac{1}{\phi(q)}\sum_{\substack{n \leq x \\ (n,q) = 1}} \nu_H(n)  + O\left(\frac{x}{(\log x)^A}\right)
    \end{equation}
    for all $A>0$, i.e.~only the principal character mod $q$ will make a contribution to the main term. Indeed, for $\chi$ a non-principal character modulo $q$, one has for $\Re(s) >1/2$ that
    \begin{align}
        \sum_{n \geq 1}\frac{\nu_H(n) \chi(n)}{n^s} &= \prod_{p} \left(1+\frac{\nu_H(p)\chi(p)}{p^s}\right) \notag\\
        &= \exp \left(\sum_{p}\log \left(1+\frac{\nu_H(p)\chi(p)}{p^s}\right)\right) \notag\\
        &\ll \exp \left(\sum_{p} \frac{\nu_H(p)\chi(p)}{p^s}\right). \label{eqvhvhi}
    \end{align}
    By applying \Cref{lemma CDT ap}, summing \eqref{lemma CDT ap eq} over all classes of forms in $H$, we see that 
    \begin{align*}
        \sum_{p \leq y} \nu_H(p) \chi(p) &= \sum_{(a,q)=1}\chi(a)\left(\sum_{\substack{p \leq y \\ p \equiv a \bmod q}} \nu_{H}(p)\right)\\
        &= \sum_{(a,q)=1}\chi(a)\left(\sum_{\substack{p \leq y \\ p \equiv a \bmod q}} 1_{f_0}(p) + \frac{1}{2}\sum_{f \in H \setminus [f_0]}\sum_{\substack{p \leq y \\ p \equiv a \bmod q}} 1_{f}(p)\right)\\
        &= \sum_{(a,q)=1}\chi(a)\left(\frac{1}{2 \phi(q) h}Li(y) + \frac{|H|-1}{2 \phi(q) h}Li(y) + O_a(y\exp{(- c \sqrt{\log y})})\right)\\ 
        &= \frac{|H|}{2 \phi(q) h}\sum_{(a,q)=1}\chi(a)\left(Li(y) + O_a(y\exp{(- c \sqrt{\log y})})\right)\\ 
        &= O(y\exp{(- c \sqrt{\log y})}).
    \end{align*}
    We introduced a factor $1/2$ in the second line above to avoid double-counting; each prime is simultaneously represented by the class $[f]$ and the class $[f]^{-1}$. Since $|C(D)|$ is odd, $[f] = [f]^{-1}$ if and only if $[f] =[f_0]$, and so there is no double counting for $f_0$. Hence, by partial summation, we have
    \begin{align*}
        \sum_{p \leq y} \frac{\nu_H(p)\chi(p)}{p^s} &= y^{-s} \sum_{p \leq y}\nu_H(p)\chi(p) + s\int_2^y t^{-s-1}\sum_{p \leq t}\nu_H(p)\chi(p) \:dt \\
        &\ll sy^{1-s}\exp{(-c \sqrt{\log y})}
    \end{align*}
    We see that the limit as $y$ tends to infinity converges if, say ${\Re(s) \geq 1-1/(\log y)^{1/2}}$. In this range, using \eqref{eqvhvhi} we have
    $$\sum_{n \geq 1}\frac{\nu_H(n) \chi(n)}{n^s} \ll 1.$$
    Using partial summation again, with ${\Re(s) = 1-1/(\log y)^{1/2}}$, we have
    \begin{align*}
        \sum_{n \leq x} \nu_H(n) \chi(n) &= x^s \sum_{n \leq x}\frac{\nu_H(n) \chi(n)}{n^s} - s\int_2^x t^{s-1}\sum_{n \leq t}\frac{\nu_H(n) \chi(n)}{n^s}\:dt\\
        &\ll x^{1-\frac{1}{(\log x)^{1/2 + \epsilon}}}\\
        &\ll \frac{x}{(\log x)^A},
    \end{align*}
    for any $A>0$, as the partial sums in the above equation are bounded when ${\Re(s) = 1-1/(\log y)^{1/2}}$. By applying the orthogonality relations for Dirichlet characters we have, for $(a,q) = 1$,
    \begin{equation}
    \label{starsuppchantal}
        \begin{split}
            \sum_{\substack{n \leq x\\ n\equiv a \bmod q}}\nu_{H}(n) &= \frac{1}{\phi(q)}\sum_{\chi \bmod q}\chi^{-1}(a)\sum_{n \leq x} \chi(n) \nu_{H}(n)\\
        &= \frac{1}{\phi(q)}\sum_{\substack{n \leq x \\ (n,q) = 1}} \nu_{H}(n) +O\left( \frac{x}{(\log x)^{A}}\right).
        \end{split}
    \end{equation}
    This proves that \eqref{nu fn ap eq} holds for all moduli $q$ satisfying $(q,2D) =1$. At this stage, one can apply the classical result of \cite{wirsing1961asymptotische} for sums of bounded multiplicative functions:
    \begin{theorem}[Wirsing]
        Given a non-negative multiplicative function $f(n)$, assume there exists constants $\alpha,\beta$ with $\beta<2$ such that $f(p^k) \leq \alpha \beta^k$ for each prime $p$ and integer $k \geq 2$. Assume further that as $x \to \infty$, one has
        $$\sum_{p \leq x}f(p) \sim \kappa \frac{x}{\log x},$$
        where $\kappa$ is a constant. Under these assumptions, as $x \to \infty$, one has
        $$\sum_{n \leq x}f(n) \sim \frac{e^{\gamma \kappa}}{\Gamma(\kappa)} \frac{x}{\log x}\prod_{p \leq x} \left( \sum_{k \geq 0} \frac{f(p^k)}{p^k} \right),$$
        where $\gamma$ is the Euler-Mascheroni constant.
    \end{theorem}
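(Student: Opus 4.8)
The plan is to follow the classical elementary route of Wirsing, deriving and then solving an integral equation for the summatory function, rather than appealing to analytic continuation of a Dirichlet series. The latter is exactly what is \emph{not} available here: the sole arithmetic input is a mean-value estimate over primes, with no control of $F(s)=\sum_n f(n)n^{-s}$ in a neighbourhood of $\Re(s)=1$, which is what places this result beyond the reach of \Cref{LSD}. Set $S(x)=\sum_{n\le x}f(n)$. First I would record, by Abel summation, that the hypothesis $\sum_{p\le x}f(p)\sim\kappa x/\log x$ is equivalent to $\sum_{p\le x}f(p)\log p\sim\kappa x$, the form that feeds cleanly into the identity below. I would also introduce the generalized von Mangoldt function $\Lambda_f$, supported on prime powers and defined via $-F'/F=\sum_n\Lambda_f(n)n^{-s}$; multiplicativity of $f$ gives $\Lambda_f(p)=f(p)\log p$ together with the pointwise convolution identity $f(n)\log n=\sum_{d\mid n}\Lambda_f(d)f(n/d)$.

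Summing that identity over $n\le x$ and equating it with the Abel-summation expression $\sum_{n\le x}f(n)\log n=S(x)\log x-\int_1^x S(t)\,t^{-1}\,dt$ yields the fundamental integral equation
$$S(x)\log x=\int_1^x\frac{S(t)}{t}\,dt+\sum_{p^k\le x}\Lambda_f(p^k)\,S(x/p^k).$$
The next step is to discard the higher prime powers: the hypothesis $f(p^k)\le\alpha\beta^k$ with $\beta<2$ forces $\sum_p\sum_{k\ge2}|\Lambda_f(p^k)|\,p^{-k}<\infty$, so the $k\ge2$ terms contribute only lower order, and the equation reduces, up to an admissible error, to one driven by $\sum_{p\le x}f(p)\log p\,S(x/p)$, into which the prime hypothesis enters directly.

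With the equation in hand I would first establish the order of magnitude $S(x)\asymp x(\log x)^{\kappa-1}$ by a Levin--Fainleib-type argument: recast the identity as an integral inequality and iterate, using $\sum_{p\le x}f(p)\log p\sim\kappa x$ to generate the exponent $\kappa-1$. I expect this passage, from a purely averaged grip on $f$ over primes to a genuine two-sided bound on $S$, to be the main obstacle, as there is no pole or analytic continuation to exploit.

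Finally I would upgrade the order of magnitude to the sharp asymptotic. Writing $x=e^u$ and $S(e^u)=e^u V(u)$ turns the integral equation into a renewal/differential--difference equation for $V$ whose normalized solution is governed by $1/\Gamma(\kappa)$; equivalently one compares $S$ against the model function $d_\kappa$ with $\sum_n d_\kappa(n)n^{-s}=\zeta(s)^\kappa$, for which this constant is classical. The truncated Euler product $\prod_{p\le x}\sum_{k\ge0}f(p^k)p^{-k}$ then appears as the product of local densities, and the exponential factor emerges when this product is weighed against Mertens' theorem $\prod_{p\le x}(1-1/p)^{-1}\sim e^{\gamma}\log x$, which converts its $(\log x)^{\kappa}$-growth into the stated normalization. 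Some care is required for non-integer $\kappa>0$, where $\Gamma(\kappa)$ must be treated analytically rather than combinatorially.
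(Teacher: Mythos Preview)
The paper does not prove this statement at all: Wirsing's theorem is quoted as a classical result from \cite{wirsing1961asymptotische} and then applied (to $\nu_H$ and to $\nu_H\cdot\chi_0$) as a black box in the proof of \Cref{gollemma1ap}. There is therefore no proof in the paper to compare your proposal against.

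That said, your outline is essentially the classical elementary route that Wirsing himself took, and it is a reasonable sketch. A couple of cautions if you intend to flesh it out. First, your claim that the $k\ge 2$ terms contribute only lower order needs a bit more than $\sum_p\sum_{k\ge 2}|\Lambda_f(p^k)|p^{-k}<\infty$: what appears in the integral equation is $\sum_{p^k\le x}\Lambda_f(p^k)S(x/p^k)$, and to bound this you need control on $S$ itself, so the argument has to be set up iteratively (first get crude upper bounds on $S$, then feed them back in). Second, the Levin--Fa\u{\i}nle\u{\i}b step you allude to is indeed the heart of the matter, and in Wirsing's original proof it is handled by a somewhat delicate Tauberian/iteration argument rather than by a clean one-line inequality; you should expect that passage to absorb most of the work. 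Finally, the identification of the constant via comparison with $\zeta(s)^\kappa$ and Mertens is correct in spirit, but making it rigorous for general $\kappa>0$ requires some care with the slowly varying factor $\prod_{p\le x}\bigl(\sum_{k}f(p^k)p^{-k}\bigr)\prod_{p\le x}(1-1/p)^{\kappa}$, which need not converge.
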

    In Lemma 1 of \cite{golubeva2001exceptional}, Wirsing's theorem is applied to $\nu_H(n)$, and one finds that $\kappa = \frac{|H|}{2 h},$ and
    \begin{equation}
    \label{starsupp2}
        \begin{split}
            \sum_{n \leq x} \nu_H(n) &= \frac{e^{\gamma \kappa}}{\Gamma(\kappa)} \frac{x}{\log x}\prod_{p \leq x} \left( 1 + \frac{\nu_H(p)}{p} \right)(1+o(1))\\ &= A_3 \frac{x}{\log x} \exp \left( \sum_{p \leq x} \frac{\nu_H(p)}{p} \right)(1+o(1))\\
    &= A_4 \frac{x}{(\log x)^{1-|H|/(2h)}} (1+o(1)),
        \end{split}
    \end{equation}
    for some constants $A_3, A_4$, which can explicitly be written:
    $$A_3 := \frac{e^{\gamma \kappa}}{\Gamma(\kappa)} \exp\left(-\sum_{p}\sum_{k \geq 2} \frac{(-1)^k(\nu_H(p))^k}{kp^k}\right),$$
    $$A_4 := A_3\lim_{N \to \infty} \exp \left( \sum_{p \leq N} \frac{\nu_H(p)}{p} - \frac{|H|}{2h}\log\log N \right).$$
    
    To continue the proof of \Cref{gollemma1ap}, we apply Wirsing's Theorem to the multiplicative function $\nu_H * \chi_0$, where $\chi_0$ is the principal Dirichlet character modulo $q$. We find, again, that $\kappa = \frac{|H|}{2 h}.$ This leads to
    \begin{equation}
    \label{starstarchantal}
        \begin{split}
            \sum_{\substack{n \leq x \\ (n,q) = 1}} \nu_H(n) &= A_3 \prod_{p|q}\left(1 + \frac{\nu_H(p)}{p}\right)^{-1}\frac{x}{\log x} \exp \left( \sum_{p \leq x} \frac{\nu_H(p)}{p} \right)(1+o(1))\\ &= \prod_{p|q}\left(1 + \frac{\nu_H(p)}{p}\right)^{-1}\sum_{\substack{n \leq x }} \nu_H(n)(1+o(1)).
        \end{split}
    \end{equation}
    We remark that the constant $A_3$ in \eqref{starstarchantal} and \eqref{starsupp2} is the very same, since
    $$\sum_{p \leq x}\nu_{H}(p) \sim \sum_{\substack{p \leq x \\ p \nmid q}}\nu_{H}(p).$$
    We obtained the term $\sum_{n \leq x}\nu_{H}(n)$ in \eqref{starstarchantal} by using the first equality in \eqref{starsupp2}.  We can then write 
    $$S(x,q,a) = \sum_{\substack{p_1 \cdots p_r \leq \sqrt{x} \\ p_i \textit{ repr. by }[f_i]\\p_i \nmid q \\p_i \ne p_j}} \sum_{\substack{m \leq x(p_1 \cdots p_r)^{-1} \\ m \equiv a(p_1 \cdots p_r)^{-1} \bmod q}} \nu_H(m) + \sum_{\substack{m \leq \sqrt{ x}\\(m,q) =1}}\nu_H(m) \sum_{\substack{ \sqrt{ x } \leq p_1 \cdots p_r \leq x/m\\ p_i \textit{ repr. by }[f_i]\\p_i \ne p_j\\ p_1 \cdots p_r \equiv m^{-1}a \bmod q}}1$$
    We note that 
    \begin{align*}
        \sum_{\substack{m \leq \sqrt{ x}\\(m,q) =1}}\nu_H(m) \sum_{\substack{ \sqrt{ x } \leq p_1 \cdots p_r \leq x/m\\ p_i \textit{ repr. by }[f_i]\\p_i \ne p_j\\ p_1 \cdots p_r \equiv m^{-1}a \bmod q}}1 &\ll \sum_{m \leq \sqrt{x}}\nu_H(m) \sum_{\substack{ \sqrt{ x } \leq p_1 \cdots p_r \leq x/m\\p_i \ne p_j}}1\\
        &\ll \frac{x}{\log x}(\log \log x)^{r-1} \sum_{m \leq \sqrt{x}} \frac{\nu_H(m)}{m}\\
        &\ll \frac{x}{(\log x)^{1-|H|/(2h)}}(\log \log x)^{r-1},
    \end{align*}
    using summation by parts and Wirsing's theorem in the last line. We also have, using \eqref{starsuppchantal} and \eqref{starstarchantal},
    
    \begin{align}\hspace{-0.7cm}
    \label{golulem1sumeq}
        \sum_{\substack{p_1 \cdots p_r \leq \sqrt{x} \\ p_i \textit{ repr. by }[f_i]\\p_i \nmid q \\p_i \ne p_j}} \sum_{\substack{m \leq x(p_1 \cdots p_r)^{-1} \\ m \equiv a(p_1 \cdots p_r)^{-1} \bmod q}} \nu_H(m)= \frac{1}{\phi(q)}\prod_{p|q}\left(1 + \frac{\nu_H(p)}{p}\right)^{-1}\sum_{\substack{p_1 \cdots p_r \leq \sqrt{x} \\ p_i \textit{ repr. by }[f_i]\\p_i \nmid q \\p_i \ne p_j}} \sum_{\substack{m \leq x(p_1 \cdots p_r)^{-1}}} \nu_H(m)(1+o(1)),
    \end{align}
    and in \cite{golubeva2001exceptional} it is shown that 
    \begin{equation}
    \label{golu final eq}
        \sum_{\substack{p_1 \cdots p_r \leq \sqrt{x} \\ p_i \textit{ repr. by }[f_i]\\ p_i \nmid q\\p_i \ne p_j}} \sum_{\substack{m \leq x(p_1 \cdots p_r)^{-1}}} \nu_H(m) = \frac{A_4}{h^r}\frac{X}{(\log x)^{1-H/(2h)}} (\log \log x)^r(1+o(1)).\footnote{Actually, Golubeva proves this without the condition ``$p_i \nmid q$'', but the argument is the same in any case, and this condition does not have an effect on the main term.}
    \end{equation}
    This proves \Cref{gollemma1ap}.
\end{proof}
Having proven the validity of \eqref{golulemmaapeq}, we may sum it over the disctinct tuples of $[f_1],\cdots [f_r]$ as discussed prior to \Cref{gollemma1ap}. In doing so, we obtain the following result:
\begin{lemma}
    \label{goluthmsfap}
    Let $N'_f(x,q,a)$ note the number of integers not exceeding $x$ which are square-free, coprime to $2D$, and exceptional for the form $f$. One has, for $a,q,\nu_H$ as in \Cref{gollemma1ap}:
    $$N'_f(x,q,a) = \frac{1}{\phi(q)}\prod_{p|q}\left(1+\frac{\nu_H(p)}{p}\right)^{-1}N'_f(x,1,1) (1+o(1)).$$
\end{lemma}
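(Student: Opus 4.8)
The plan is to reduce the statement to a finite sum of instances of \Cref{gollemma1ap}, exploiting the crucial feature that the proportionality constant appearing there is \emph{independent} of the tuple $([f_1],\dots,[f_r])$.

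First I would invoke Golubeva's structural description recorded in \Cref{case1} and \Cref{case2}: after discarding a sparse set, every squarefree integer coprime to $2D$ that is exceptional for $f$ has the shape $n = m p_1 \cdots p_r$ attached to one of a \emph{finite} collection $\mathcal{T}$ of distinct tuples $([f_1],\dots,[f_r])$, with $r = p_0-2$ or $r = p_0-3$ according to whether $[f] \in H$. Since the class assignment of the prime factors of $n$ is intrinsic to $n$, and since distinct tuples (in the sense of the remark preceding \Cref{case1}) produce disjoint sets of integers, inserting the congruence $n \equiv a \bmod q$ yields the exact decomposition
$$N'_f(x,q,a) = \sum_{([f_1],\dots,[f_r]) \in \mathcal{T}} S_{[f_1],\dots,[f_r]}(x,q,a) + E(x,q,a),$$
where $E(x,q,a)$ counts the discarded sparse set intersected with the progression. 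Taking $q=1$, $a=1$ gives the analogous decomposition of $N'_f(x,1,1)$; this is exactly the passage from \Cref{gollemma1} to \Cref{goluthm}.

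Next I would apply \Cref{gollemma1ap} to each of the finitely many summands, obtaining
$$S_{[f_1],\dots,[f_r]}(x,q,a) = \frac{1}{\phi(q)}\prod_{p|q}\left(1+\frac{\nu_H(p)}{p}\right)^{-1} S_{[f_1],\dots,[f_r]}(x,1,1)\,(1+o(1)).$$
Because the leading constant does not depend on the tuple, I can pull it outside the finite sum over $\mathcal{T}$; and because $\mathcal{T}$ is finite I may replace the individual $o(1)$ errors by a single $o(1)$ (take the maximum over $\mathcal{T}$, noting that by \Cref{gollemma1} every $S_{[f_1],\dots,[f_r]}(x,1,1)$ has the \emph{same} order of magnitude $x(\log\log x)^r/(\log x)^{1-|H|/(2h)}$, differing only in the constant $A(f_1,\dots,f_r)$). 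Resumming the terms $S_{[f_1],\dots,[f_r]}(x,1,1)$ back into $N'_f(x,1,1)$ then produces the claimed identity.

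The one point requiring care — and the main obstacle — is controlling the sparse error $E(x,q,a)$. It suffices to know that the discarded set of exceptional integers is of size $o\big(x(\log\log x)^r/(\log x)^{1-|H|/(2h)}\big)$, which is precisely the input from Golubeva underlying \Cref{goluthm}; restricting to $n \equiv a \bmod q$ only shrinks this count, so $E(x,q,a)$ stays of smaller order than the main term and is absorbed into the $(1+o(1))$. The remaining bookkeeping — that the conditions $p_i \nmid q$ and $p_i \ne p_j$, together with the overlaps between non-distinct tuples, contribute negligibly and do so consistently in both the $(x,q,a)$ and the $(x,1,1)$ counts — is already handled inside the proof of \Cref{gollemma1ap} and in Golubeva's treatment of the $q=1$ case, so no new estimate is needed here.
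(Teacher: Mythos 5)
Your proposal is correct and is essentially the paper's own argument: the paper proves \Cref{goluthmsfap} in one line by summing \eqref{golulemmaapeq} over the finitely many distinct tuples from \Cref{case1} and \Cref{case2}, pulling out the tuple-independent constant $\frac{1}{\phi(q)}\prod_{p|q}(1+\nu_H(p)/p)^{-1}$, exactly as you do. Your additional remarks on absorbing the discarded sparse set and on the uniformity of the $o(1)$ over the finite tuple collection are points the paper leaves implicit, but they introduce no new ideas beyond the paper's route.
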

\Cref{goluthmsfap} is a statement about squarefree integers coprime to $2D$, and we also note that 
\begin{equation}
    \label{frf sf}
    B'_f(x,q,a) = B'_R(x,q,a) - N'_f(x,q,a),
\end{equation}
where the $'$ symbol always indicates a sum over squarefree integers which are coprime to $2D$. By our work in this section, we now have an explicit form for the main term of $N'_f(x,q,a)$. This yields the secondary term in the expansion of \Cref{thm3}.  

\subsubsection{An Explicit Numerical Example}
To end \Cref{pf gol thm ap sect}, let us give an explicit example which demonstrates the computational aspects of \Cref{thm3}. Let us fix $f= 2x^2+xy+3y^2$, so that and $q = 3$ so that $D = -23, f \neq f_0,
    \legendre{D}{q} = 1,
    \nu_H(q) = 0, $ and
    $${C(D) = \{x^2+xy+6y^2,2x^2+xy+3y^2,2x^2-xy+3y^2\}},$$
    so that $h=3$, $r = 0$ and $H$ is the trivial subgroup, consisting of the class $f_0 = x^2+xy+6y^2$.

We must then compute the constants $A_3$ and $A_4$ from \eqref{starsupp2}. Using Wolfram Mathematica, we can get a suitable numerical approximation;
$$A_3=\frac{e^{\gamma /6}}{\Gamma(1/6)} \exp \left(-\sum _{k=2}^{\infty}\; \sum _{\substack{p \textit{ repr.}\\ \textit{ by }f_0}} \frac{(-1)^k}{k p^k}\right) = 0.162977...$$
$$A_4 = A_3 \lim_{N \to \infty} \exp \left(\sum _{\substack{p \leq N \\ p \textit{ repr. }f}} \frac{1}{p}-\frac{1}{6} \log \left(\log \left(N\right)\right)\right) = 0.133413...$$
In our computation, we let $k$ go up to $15$, and $p$ go up to $10^8$. Those interested in numerical computations should note that large values of $k$ hardly contribute to the constants, and that the alternating nature of the series for $A_3$ ensures a rapid convergence. In this example, $r=0$, so the constant $A_2(f)$ from \Cref{thm3} is already computed; $A_2(f) = A_4.$
Then, according to \Cref{thm3}, we have
\begin{equation}
\label{computing expll d=-23 r=0}
    N_f'(x,3,a) =\begin{cases}
    \frac{A_4}{2} \frac{x}{(\log x)^{5/6}}(1+o(1)),\; \textit{if }a \equiv 1,2  \bmod 3\\
    o\left(\frac{x}{(\log x)^{5/6}}\right),\; \textit{if }a \equiv 0 \bmod 3\\
\end{cases}
\end{equation}
The following table shows our theorem's main term compared to the actual numerical data for $N_f'(10^8,3,a)$:
\begin{table}[h]
    \centering
    \begin{tabular}{|c|c|c|}
\hline
a   & $N_f'(10^8,3,a)$ & Estimate from \eqref{computing expll d=-23 r=0} \\ \hline
$0$ & $0$              & $o\left(\frac{x}{(\log x)^{5/6}}\right)$           \\
$1$ & $568 737$         & $588 499$          \\
$2$ & $568 775$         & $588 499$          \\ \hline
\end{tabular}
    \caption{Comparing \cref{thm3} with numerical data when $f=2x^2+xy+3y^2$}
    \label{comparison golu 1}
\end{table}
\begin{remark*}
    We see from \cref{comparison golu 1} that $N_f'(10^8,3,0) = 0$. This is no surprise: The integers we are counting are squarefree and of the form $n = m$, where any prime divisor of $m$ must be represented by the principal form $f_0=x^2+xy+6y^2$. Since $q=3$ is not represented by the principal form, then none of the exceptional integers for $f=2x^2+xy+3y^2$ will be divisible by $3$.
\end{remark*}

Though in this example our theorem matches the data well, this is not always the case. It seems that the fit is only good in cases when $r=0$. Indeed, when $r > 0$, the main term of \eqref{golu final eq} has a factor $(\log \log x)^r/h^r$, which is initially very small; it is less than $1$ until $x \geq e^{e^h}$, implying a severe numerical underestimate of our theorem when $x$ is, say, $10^8$.
 \subsection{Dropping the Squarefreeness Condition}
 \label{sectremovesf}
 We have proven an asymptotic count for the number of integers not exceeding $x$ which are squarefree, coprime to $2D$, congruent to $a \bmod q$, and exceptional for a given form $f$. We now comment on the claim made (without proof) by Golubeva that ommiting the condition ``squarefree" changes our asymptotic count only by a constant.
\label{squarefree section}Let $f$ be a fixed reduced form which we decide upon from the start. Let us write $E_f(n)$ to be the function that is $1$ when $n$ is exceptional for $f$, and $0$ otherwise. \begin{lemma}
\label{lemma sf 1e}
    Given a squarefree number $m$ for which $1_R(m) = 1$, and any integer $s$, one has $${E_f(ms^2) = 1 \implies E_f(m) = 1}.$$
\end{lemma}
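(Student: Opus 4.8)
The plan is to reduce the implication to the single homogeneity property of a binary quadratic form, namely that $f(sx, sy) = s^2 f(x,y)$ for all integers $s, x, y$. First I would unpack the two definitions involved. By hypothesis $1_R(m) = 1$, so $m$ is already represented by a form in the genus $R$ of $f$; this is precisely the ``represented by the genus'' half of the assertion $E_f(m) = 1$. Hence the only thing left to establish is that $m$ is \emph{not} represented by $f$ itself. Likewise, the assumption $E_f(ms^2) = 1$ unpacks into two facts, of which I will use only the second: $ms^2$ is not represented by $f$.

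I would then argue by contradiction. Suppose $m$ \emph{were} represented by $f$, say $f(x_0, y_0) = m$ for some $x_0, y_0 \in \Z$. Applying homogeneity with the scaling $(x_0, y_0) \mapsto (s x_0, s y_0)$ gives $f(s x_0, s y_0) = s^2 f(x_0, y_0) = m s^2$, so $f$ represents $m s^2$. This contradicts the fact, extracted from $E_f(ms^2) = 1$, that $ms^2$ is not represented by $f$. Therefore $m$ is not represented by $f$, and combining this with $1_R(m) = 1$ yields $E_f(m) = 1$, as desired.

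There is essentially no analytic obstacle here; the content is the observation that the set of values represented by $f$ is closed under multiplication by perfect squares, an immediate consequence of homogeneity. The one point that deserves emphasis is what the two hypotheses are doing. The assumption $1_R(m) = 1$ is needed precisely because representability by the genus $R$ is \emph{not} obviously preserved when a square factor is removed, so it must be supplied separately rather than deduced from $1_R(ms^2) = 1$. Squarefreeness of $m$ is not used in this particular implication; it merely records the setting in which the lemma will be applied, where a general integer is written as a squarefree part times a square. It is also worth flagging that only this one direction holds in general: representability by $f$ is \emph{not} expected to be closed under dividing out a square (the scaled representation $(s x_0, s y_0)$ is imprimitive), so one should not anticipate the converse implication for free.
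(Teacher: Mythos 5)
Your proof is correct and follows essentially the same route as the paper: both argue that if $m$ were represented by $f$ then so would be $ms^2$, contradicting (or contraposing) $E_f(ms^2)=1$, with the hypothesis $1_R(m)=1$ supplying the genus-representability needed to upgrade ``not represented by $f$'' to $E_f(m)=1$. The only difference is cosmetic: you justify the key step by homogeneity, $f(sx_0,sy_0)=s^2f(x_0,y_0)$, whereas the paper invokes the fact that $s^2$ is represented by the principal form and composes; your version is the more elementary of the two.
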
 
\begin{proof}
    We prove the contra-positive statement. Suppose that $E_f(m) = 0$, then $m$ is represented by the form $f$. Since $s^2$ is represented by the principal form (among other forms), then $ms^2$ is also represented by $f$, i.e. $E_f(ms^2) = 0$.
\end{proof}
The converse statement is not true, as we could have some $s$ such that $E_f(m) = 1$, but $E_f(ms^2) = 0$. For example, consider the quadratic form $f(x,y) = x^2+xy+15y^2$. In this case, the integer $5$ is exceptional for $f$, but $5\cdot3^2$ is actually represented by $f$, so not exceptional. Thus, $E_f(5) = 1$, and $E_f(5\cdot3^2) = 0$. By combining \Cref{lemma sf 1e} with \Cref{goluthm}, we have
\begin{align*}
    C x\frac{(\log \log x)^\beta}{(\log x)^{\alpha}}(1+o(1)) = \sum_{n \leq x} E_f(n) &= \sum_{s = 1}^{\sqrt{x}} \sum_{\substack{ms^2 \leq x\\m \textit{ squarefree}}} E_f(ms^2)\\
    &\leq \sum_{s = 1}^{\infty} \sum_{\substack{m \leq x/s^2 \\m \textit{ squarefree}}} E_f(m)\\
    &=C_1 \sum_{s = 1}^{\infty} \frac{x}{s^2}\frac{(\log \log x/s^2)^\beta}{(\log x/s^2)^{\alpha}}(1+o(1))\\
    &= C_1 \sum_{s = 1}^{\infty} \frac{x}{s^2}\frac{(\log \log x)^\beta}{(\log x)^{\alpha}}\left(1+O\left(\frac{\log s}{\log \log x}\right)\right)\\
    &\leq C_1 \frac{\pi^2}{6}x\frac{(\log \log x)^\beta}{(\log x)^{\alpha}}(1+o(1)),
\end{align*}
with $C, C_1, \alpha, \beta$ above being some constants coming from \Cref{goluthm}. The error term involving $\log s$ can be determined using series expansions. Indeed, one finds that
$$\frac{(\log \log x/s^2)^\beta}{(\log x/s^2)^{\alpha}} = \frac{(\log \log x)^\beta}{(\log x)^{\alpha}}\left(1+O\left(\frac{\log s}{\log \log x}\right)\right).$$
All in all, we've given an upper bound on the number of exceptional integers for $f$ not exceeding $x$, and the bound implies that once we drop the squarefree restriction, we only change our count by a constant.\footnote{Assuming that $\sum_{n \leq x}E_f(n)$ converges in the first place.}~Since we have no characterization of the integers $s$ where $E_f(m) = 1$ and $E_f(ms^2) = 0$ simultaneously, we cannot seem to explicitly compute the involved constant.

\section{Conjecture and Observations}
In this section, we state a conjecture on the behaviour of the integers represented by binary quadratic forms in cases not covered by our theorems. It seems natural to conjecture that the bias we have exhibited here in many cases remains true when we drop any restrictions (i.e. we do not assume anything about the class group, and drop any restrictions on squarefreeness/coprimality with $2D$).
\begin{conj}\label{mainconj}
Let $D$ be any fundamental discriminant. Let $f$ be any reduced binary quadratic form of discriminant $D$. Let $q$ be any prime such that $(q,2D)=1$. The secondary term in the asymptotic expansion of $B_f(x;q,a)$ contains a constant which is larger when $a \equiv 0 \bmod q$, and smaller otherwise. 
\end{conj}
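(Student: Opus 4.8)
The plan is to reduce the general conjecture to the two mechanisms already isolated in \Cref{thm2,thm3} by using the exact decomposition $B_f(x;q,a) = B_R(x;q,a) - N_f(x;q,a)$ from \eqref{subtractexep}, where $R$ is the genus containing $f$ and $N_f$ counts the exceptional integers. First I would record orders of magnitude. \Cref{equidgeneralemma} gives the secondary term of $B_R(x;q,a)$ at size $x/(\log x)^{3/2}$, whereas any nonzero contribution from $N_f$ is of size $x(\log\log x)^\beta/(\log x)^\alpha$ with $1/2<\alpha<1$ by the Golubeva-type analysis. Hence the secondary term of $B_f$ is whichever of these is larger, and since $\alpha<3/2$, the exceptional term strictly dominates whenever exceptional integers exist; when $C(D)\cong G(D)$ there are none and the genus term is the secondary term. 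Because $C(D)\ne G(D)$ forces every genus to be a coset of $C(D)^2$ of size exceeding one, this dichotomy partitions the argument into exactly the regimes of \Cref{thm2,thm3}, with the exceptional regime otherwise universal.

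For the genus contribution I would invoke \Cref{equidgeneralemma}, which holds for arbitrary fundamental $D$ and yields the secondary-term constant $b_1$ of \eqref{constant b1 lemma 1}; the manipulation in the proof of \Cref{thm2} then applies unchanged to show that this constant is larger at $a\equiv0$ than at $a\not\equiv0$, through the sign of $\delta(q,a)$. When $C(D)\cong G(D)$ we have $B_f=B_R$ and this already settles the conjecture; otherwise it is the subdominant piece, consistent in sign with the dominant one below.

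The substance is the exceptional regime for general $C(D)$. Here I would first extend Golubeva's characterization (\Cref{case1,case2}) beyond the cyclic odd-order case. The general principle is that for squarefree $n$ coprime to $2D$ with all prime factors split, the set of classes representing $n$ is $\{\prod_i[\mathfrak p_i]^{\varepsilon_i}:\varepsilon_i=\pm1\}$, so $n$ is represented by the genus $R$ iff $\prod_i[\mathfrak p_i]\equiv R \bmod C(D)^2$, and $n$ is exceptional for $f$ iff additionally none of the $2^{\omega(n)}$ signed products equals $[f]^{\pm1}$. I would show that, up to a sparse set, the optimal configurations are again of the shape $m\,p_1\cdots p_r$, with the prime factors of $m$ drawn from a proper subgroup $H\subset C(D)$ maximizing $|H|/(2h)$ and with $(p_1,\dots,p_r)$ ranging over a finite list of admissible class-tuples fixed by the coset combinatorics. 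A generalized counting lemma in the spirit of \Cref{gollemma1,gollemma1ap}, built on the joint Chebotarev input \Cref{lemma CDT ap} and Wirsing's theorem, would then give for coprime $a$ the equidistributed count $N_f(x;q,a)=\phi(q)^{-1}\prod_{p\mid q}(1+\nu_H(p)/p)^{-1}N_f(x)(1+o(1))$, exactly as in \Cref{gollemma1ap}, while for $a\equiv0$ the extra requirement $q\mid n$, possible only if $q$ is represented by an admissible class, cuts the count to a strictly smaller constant (or to a lower-order quantity when $q$ lies in no admissible class). The bias then follows without the explicit value of $N_f(x)$: the exceptional integers are \emph{subtracted} in \eqref{subtractexep}, and since they are rarer in the zero class, $B_f(x;q,a)$ retains more mass there; concretely the secondary-term constant $-c'(q,a)$ is larger at $a\equiv0$, generalizing the inequality $q/(q^2-1)>1/(q+1)$ of \Cref{thm3}.

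I expect the main obstacle to be the combinatorial step of identifying the dominant subgroup $H$ and the admissible tuples when $C(D)$ has even order or is non-cyclic: \Cref{case1,case2} exploit cyclicity and oddness, so that $[f]\ne[f]^{-1}$ except for the principal class, in an essential way, and for groups with $2$-torsion the fusion of a class with its inverse and the interaction with the quotient $C(D)/C(D)^2$ make the enumeration of products avoiding $[f]^{\pm1}$ genuinely more delicate; one must also rule out several competing subgroups $H$ of equal maximal order contributing at the same order of magnitude. The second, softer obstacle is the removal of the squarefree and $(n,2D)=1$ restrictions: the argument of \Cref{sectremovesf} only controls the total count up to an undetermined constant because of the integers with $E_f(m)=1$ but $E_f(ms^2)=0$, so one would need to show that this defect is itself sufficiently equidistributed in residue classes mod $q$ to preserve, rather than merely not destroy, the zero-class bias.
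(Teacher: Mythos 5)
The statement you are addressing is stated in the paper as \Cref{mainconj}, a \emph{conjecture}: the paper offers no proof of it, only numerical evidence and a one-sentence indication of what a proof would require (namely, showing that $N_f(x,q,a)$ either exhibits no bias at the scale $x/(\log x)^{3/2}$ or is biased in the favourable direction, i.e.\ smaller when $a\equiv 0\bmod q$). Your proposal is essentially an expansion of that same suggested route --- decompose via \eqref{subtractexep}, handle the genus part with \Cref{equidgeneralemma}, and handle the exceptional part by a Golubeva-type analysis --- so there is no divergence of strategy to report; the only question is whether your sketch closes the gap that the paper leaves open.

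It does not. The two steps you flag as ``obstacles'' are not technical refinements but the entire open content of the conjecture. First, the characterization of exceptional integers (\Cref{case1,case2}) rests on Golubeva's Lemmas 2--6, which use in an essential way that $C(D)$ is cyclic of odd order, so that every non-principal class satisfies $[g]\neq[g]^{-1}$ and the extremal configurations have a clean shape $mp_1\cdots p_r$ over a single distinguished subgroup $H$. For a general class group (2-torsion, non-cyclic, several maximal subgroups of equal index) you assert but do not show that the dominant configurations take this form for one $H$, nor that the resulting arithmetic-progression constant is monotone in the required direction; if two maximal subgroups $H_1,H_2$ of the same index contribute at the same order of magnitude with $\nu_{H_1}(q)\neq\nu_{H_2}(q)$, the sign of the bias in $N_f(x,q,a)$ is no longer governed by the single inequality $q/(q^2-1)>1/(q+1)$. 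Second, even granting all of that, \Cref{mainconj} concerns $B_f(x;q,a)$ with no squarefreeness or coprimality restriction, and \Cref{sectremovesf} shows only that dropping these restrictions changes the total count by an inexplicit constant; you would need the defect set $\{ms^2 : E_f(m)=1,\ E_f(ms^2)=0\}$ to be equidistributed (or favourably biased) modulo $q$, which neither you nor the paper establishes. What you have written is a reasonable research programme, consistent with the paper's own remarks following \Cref{mainconj}, but it is not a proof.
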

The above conjecture is evidenced by much numerical data (see \Cref{numericaldata}), as well as the fact that there always exists a term of size $x/(\log x)^{3/2}$ ``hidden in the error term'' of $B_f(x;q,a)$ that has a constant satisfying the conditions of the conjecture. This second claim is clear from \Cref{equidgeneralemma} together with \eqref{subtractexep}. Proving the conjecture would likely require one to either show that for a given $f$, $N_f(x,q,a)$ exhibits no bias in terms $\gg x/(\log x)^{3/2}$ (as was the case in \Cref{thm2}), or to show that $N_f(x,q,a)$ exhibits a bias in the correct direction (i.e. $N_f(x,q,a)$ is \textit{smaller} when $a \equiv 0 \bmod q$, and \textit{bigger} otherwise), as was the case in \Cref{thm3}.

There seems to be a couple other interesting phenomena which are tangentially related to \Cref{mainconj}. For example, it seems that there is always an accentuation of the bias toward the zero residue class when one jumps from only considering squarefree integers to considering all integers represented by a given form. One can see an example of this by comparing \Cref{tablethm3sf} with \Cref{tablethm3}, or by comparing \Cref{cn3sf} with \Cref{cn3}.
\section{Numerical Data}
\label{numericaldata}
This section contains numerical examples which demonstrate \Cref{thm1,thm2,goluthmsfap}. 

\textbf{Summary of Tables:}
\begin{itemize}
    \item \Cref{cn1ed,cn1nd} display values of $B_f(x;q,a)$ for $f(x,y) = x^2 + xy +y^2$. In this case, there is a single reduced form of the chosen discriminant, making it the simplest case covered by our theorems (i.e. we compare with \Cref{thm1}).
    \item \Cref{cn2ed,cn2nd} display values of $B_f(x;q,a)$ for $f(x,y) = x^2 + 5y^2$. In this case, there are two separate reduced forms\footnote{ The second reduced form of discriminant $-20$ is $2x^2+2xy+3y^2$. A table of data for this form would look almost identical to \Cref{cn2ed} or \Cref{cn2nd}, depending on the value of $(-20|q)$, since the constants in \Cref{equidgeneralemma} do not depend on the genus.} of discriminant $-20$ lying in two separate genera. These two tables illustrate that the phenomena of \Cref{cn1ed,cn1nd} carry over to the situation where there is more than one genus, provided that each genus contains only a single form (\Cref{thm2}).
    \item \Cref{tablethm3sf} provides an example of the behaviour predicted by \Cref{goluthmsfap}. 
    \item \Cref{tablethm3} suggests that the bias towards the zero residue class is still present when we count all integers, instead of just squarefree integers coprime to $2D$. 
    \item \Cref{cn3sf,cn3}, demonstrate that a bias towards the zero class still exists in a case where $\nu_H(q) = 0$. 
    \item \Cref{table conj sup1,table conj sup 2} give further evidence for the truth of \Cref{mainconj} in cases not covered by our theorems.
\end{itemize}
\begin{table}[h]
\centering
\begin{tabular}{cc|c|c|c|ccc}
\cline{1-5} \cline{8-8}
\multicolumn{1}{|c}{$q$} & $a$ & $B_f(10^8,7,a)$ & Main Term   & Two Terms   &&&\multicolumn{1}{|c|}{Additional Information}\\ \cline{1-5} \cline{8-8}
  \multicolumn{1}{|c}{}  & $0$ & $2 342 596$     & $2 126 610$ & $2 305 520$ &&&\multicolumn{1}{|c|}{$f(x,y) = x^2+xy+y^2,$} \\ 
   \multicolumn{1}{|c}{} & $1$ & $2 181 168$     &             & $2 174 480$ &&& \multicolumn{1}{|c|}{$D=-3,$} \\
  \multicolumn{1}{|c}{}  & $2$ & $2 181 169$     &             &             &&& \multicolumn{1}{|c|}{$\legendre{D}{q}=1,$} \\
\multicolumn{1}{|c}{$7$} & $3$ & $2 181 008$     &             &             &&& \multicolumn{1}{|c|}{$C(D)=\textit{trivial},$} \\
  \multicolumn{1}{|c}{}  & $4$ & $2 181 101$     &             &             &&& \multicolumn{1}{|c|}{$G(D)=\textit{trivial},$} \\
  \multicolumn{1}{|c}{}  & $5$ & $2 181 032$     &             &             &&&\multicolumn{1}{|c|}{$B_f(10^8)/q = 2 204 167.$} \\
  \cline{8-8}
   \multicolumn{1}{|c}{} & $6$ & $2 181 096$     &             &            &&& \\
    \cline{1-5} 
\end{tabular}
\caption{The distribution of $B_f(x;7,a)$ for $f(x,y)= x^2+xy+y^2$ compared with the first term and first two terms of \Cref{thm2}. Notice the equidistribution among all residue classes, with a bias towards zero.}
\label{cn1ed}
\end{table}
\vspace{-1cm}
\begin{table}[h]
\centering
\begin{tabular}{ccccccc|c|}
\cline{1-5} \cline{8-8}
\multicolumn{1}{|c}{$q$} & \multicolumn{1}{c|}{$a$} &  \multicolumn{1}{c|}{$B_f(10^8,5,a)$} & Main Term &  \multicolumn{1}{|c|}{Two Terms}  &&&Additional Information\\ \cline{1-5} \cline{8-8}
       \multicolumn{1}{|c}{}   & \multicolumn{1}{c|}{$0$} &   \multicolumn{1}{c|}{$685734$ }    & $595 452$ &  \multicolumn{1}{|c|}{$666 121$} &&&$f(x,y) = x^2+xy+y^2$,\\ 
   \multicolumn{1}{|c}{}  & \multicolumn{1}{c|}{$1$}&     \multicolumn{1}{c|}{$3685946$ }  & $3572710$ &  \multicolumn{1}{|c|}{$3696540$} &&& $D=-3$,\\
   \multicolumn{1}{|c}{$5$}   & \multicolumn{1}{c|}{$2$} &      \multicolumn{1}{c|}{$3685770$}  &           &         \multicolumn{1}{|c|}{}  &&& $\legendre{D}{q}=-1$,\\
 \multicolumn{1}{|c}{}& \multicolumn{1}{c|}{$3$} &    \multicolumn{1}{c|}{$3685731$ }   &           &        \multicolumn{1}{|c|}{} &&& $C(D)=\textit{trivial}$,\\
   \multicolumn{1}{|c}{}  & \multicolumn{1}{c|}{$4$} &     \multicolumn{1}{c|}{$3685990$}   &           &          \multicolumn{1}{|c|}{}  &&& $G(D)=\textit{trivial}$,\\
  \cline{1-5} 
    &     &             &            && &&$B_f(10^8)/q^2 = 617 167$,\\
    &&&&&&& $(q+1)B_f(10^8)/q^2 = 3 703 001$\\
    \cline{8-8}
\end{tabular}
\caption{The distribution of $B_f(x;5,a)$ for $f(x,y)= x^2+xy+y^2$ compared with the first term and first two terms of \Cref{thm2}. Notice the equidistribution among all nonzero residue classes, with a much smaller proportion for the zero residue class.}
\label{cn1nd}
\end{table}
\vspace{-1cm}
\begin{table}[H]
\centering
\begin{tabular}{ccccccc|c|}
\cline{1-5} \cline{8-8}
\multicolumn{1}{|c}{$q$} & \multicolumn{1}{c|}{$a$} & $B_f(10^8,3,a)$ & Main Term   & \multicolumn{1}{c|}{Two Terms}   &&&Additional Information\\ \cline{1-5} \cline{8-8}
   \multicolumn{1}{|c}{} & \multicolumn{1}{c|}{$0$} & $4502885$     & $4156480$ & \multicolumn{1}{c|}{$4448270$} &&&$f(x,y) = x^2+5y^2$,\\ 
 \multicolumn{1}{|c}{$3$}   & \multicolumn{1}{c|}{$1$} & $4276237$     &             & \multicolumn{1}{c|}{$4262350$} &&& $D=-20$,\\
    \multicolumn{1}{|c}{}& \multicolumn{1}{c|}{$2$ }& $4275772$     &             &      \multicolumn{1}{c|}{}       &&& $\legendre{D}{q}=1$,\\
    \cline{1-5}
 &  &     &             &             &&& $C(D) \cong \Z/2\Z$,\\
    &  &     &             &             &&& $G(D)\cong\Z/2\Z$,\\
    &  &      &             &             &&&$B_f(10^8)/q = 4351630$.\\
    \cline{8-8}
\end{tabular}
\caption{The distribution of $B_f(x;3,a)$ for $f(x,y)= x^2+5y^2$ compared with the first term and first two terms of \Cref{thm2}.}
\label{cn2ed}
\end{table}
\begin{table}[H]
\centering
\begin{tabular}{|cc|c|c|c|ccc}
\cline{1-5}\cline{8-8}
$q$  & $a$  & $B_f(10^8,11,a)$ & Main Term & Two Terms &&&\multicolumn{1}{|c|}{Additional Information}\\ \cline{1-5} \cline{8-8}
     & $0$  & $128016$         & $103053$  & $120629$  &&& \multicolumn{1}{|c|}{$f(x,y) = x^2+5y^2,$} \\
     & $1$  & $1292745$        & $1236640$ & $1270480$ &&& \multicolumn{1}{|c|}{$D=-20,$} \\
     & $2$  & $1292628$        &           &           &&& \multicolumn{1}{|c|}{$\legendre{D}{q}=-1,$} \\
     & $3$  & $1292788$        &           &           &&& \multicolumn{1}{|c|}{$C(D)\cong \Z/2\Z,$} \\
     & $4$  & $1292739$        &           &           &&& \multicolumn{1}{|c|}{$G(D)\cong\Z/2\Z$,} \\
$11$ & $5$  & $1292791$        &           &           &&&\multicolumn{1}{|c|}{$B_f(10^8)/q^2 = 107892.$} \\

     & $6$  & $1292573$        &           &          &&&\multicolumn{1}{|c|}{$(q+1)B_f(10^8)/q^2 = 1294700.$} \\ 
     \cline{8-8}
     & $7$  & $1292545$        &           &           \\
     & $8$  & $1292595$        &           &           \\
     & $9$  & $1292875$        &           &           \\
     & $10$ & $1292599$        &           &           \\ \cline{1-5}
\end{tabular}
\caption{The distribution of $B_f(x;11,a)$ for $f(x,y)= x^2+5y^2$ compared with the first term and first two terms of \Cref{thm2}.}
\label{cn2nd}
\end{table}
\vspace{-3cm}
\begin{table}[H]
\centering
\begin{tabular}{|cc|c|cccc}
\cline{1-3}\cline{6-6}
$q$  & $a$  & $B'_f(10^8,17,a)$  &&&\multicolumn{1}{|c|}{Additional Information}\\ \cline{1-3} \cline{6-6}
     & $0$  & $376649$         &&& \multicolumn{1}{|c|}{$f(x,y) =x^2+xy+15y^2$,} \\
     & $1$  & $354287$         &&& \multicolumn{1}{|c|}{$D=-59,$} \\
     & $2$  & $354196$                 &&& \multicolumn{1}{|c|}{$\legendre{D}{q}=1,$} \\
     & $3$  & $354373$                  &&& \multicolumn{1}{|c|}{$C(D)\cong \Z/3\Z,$} \\
     & $4$  & $354313$                  &&& \multicolumn{1}{|c|}{$G(D)=trivial$,} \\
 & $5$  & $354509$                   &&&\multicolumn{1}{|c|}{$\nu_H(q)=1$.} \\
     & $6$  & $354363$                 &&& \multicolumn{1}{|c|}{$r=1$.}\\\cline{6-6}
     & $7$  & $354453$                \\
$17$  & $8$  & $354278$                 \\
     & $9$  & $354228$                 \\
     & $10$ & $354259$                 \\
      & $11$ & $354418$                 \\ 
       & $12$ & $354329$                 \\ 
        & $13$ & $354263$                 \\ 
         & $14$ & $354347$                  \\ 
          & $15$ & $354402$                \\ 
           & $16$ & $354192$                \\\cline{1-3}
\end{tabular}
\caption{Values of $B'_f(x,3,a)$, where $f(x,y)= x^2+xy+15y^2$. A bias towards the zero class is visible, as predicted by \Cref{thm3}.}
\label{tablethm3sf}
\end{table}

\begin{table}[H]
\centering
\begin{tabular}{|cc|c|cccc}
\cline{1-3}\cline{6-6}
$q$  & $a$  & $B_f(10^8,17,a)$  &&&\multicolumn{1}{|c|}{Additional Information}\\ \cline{1-3} \cline{6-6}
     & $0$  & $782426$         &&& \multicolumn{1}{|c|}{$f(x,y) =x^2+xy+15y^2$,} \\
     & $1$  & $683226$         &&& \multicolumn{1}{|c|}{$D=-59,$} \\
     & $2$  & $683405$                 &&& \multicolumn{1}{|c|}{$\legendre{D}{q}=1,$} \\
     & $3$  & $683040$                  &&& \multicolumn{1}{|c|}{$C(D)\cong \Z/3\Z,$} \\
     & $4$  & $683379$                  &&& \multicolumn{1}{|c|}{$G(D)=trivial$,} \\
 & $5$  & $683240$                   &&&\multicolumn{1}{|c|}{$\nu_H(q)=1$.} \\
     & $6$  & $683199$                 &&&  \multicolumn{1}{|c|}{$r=1$.}\\\cline{6-6}
     & $7$  & $683179$                \\
  $17$   & $8$  & $683380$                 \\
     & $9$  & $683427$                 \\
     & $10$ & $683042$                 \\
      & $11$ & $683073$                 \\ 
       & $12$ & $683018$                 \\ 
        & $13$ & $683403$                 \\ 
         & $14$ & $683214$                  \\ 
          & $15$ & $683499$                \\ 
           & $16$ & $683323$                \\\cline{1-3}
\end{tabular}
\caption{Values of $B_f(x;17,a)$, where $f(x,y)= x^2+xy+15y^2$. A bias still exists numerically when we drop the squarefreeness restriction. The bias seems to become more pronounced in doing so (compare with the bias in \Cref{tablethm3sf}).}
\label{tablethm3}
\end{table}

\begin{table}[H]
\centering
\begin{tabular}{cccc|c|}
\cline{1-3} \cline{5-5}
\multicolumn{1}{|c}{$q$} & \multicolumn{1}{c|}{$a$} &\multicolumn{1}{c|}{$B'_f(10^8,3,a)$}   & &Additional Information\\\cline{1-3} \cline{5-5}
   \multicolumn{1}{|c}{} & \multicolumn{1}{c|}{$0$} &   \multicolumn{1}{c|}{$2418331$}& &$f(x,y) = x^2+xy+6y^2$,\\ 
 \multicolumn{1}{|c}{$3$}   & \multicolumn{1}{c|}{$1$} &     \multicolumn{1}{c|}{$2325663$}& &$D=-23$,\\
    \multicolumn{1}{|c}{}& \multicolumn{1}{c|}{$2$}&        \multicolumn{1}{c|}{$2326169$}      & & $\legendre{D}{q}=1$,\\
    \cline{1-3}
 &  &     &         &  $C(D) \cong \Z/3\Z$,\\
    &  &     &           &   $G(D)=trivial$,\\
    &  &     &           &   $\nu_H(q) = 0$.\\
    
    &  &     &           &   $r= 1$.\\
    \cline{5-5}
    
\end{tabular}
\caption{Values of $B'_f(x,3,a)$, where $f(x,y)= x^2+xy+6y^2$.}
\label{cn3sf}
\end{table}

\begin{table}[H]
\centering
\begin{tabular}{cccc|c|}
\cline{1-3} \cline{5-5}
\multicolumn{1}{|c}{$q$} & \multicolumn{1}{c|}{$a$} &\multicolumn{1}{c|}{$B_f(10^8,3,a)$}   & &Additional Information\\\cline{1-3} \cline{5-5}
   \multicolumn{1}{|c}{} & \multicolumn{1}{c|}{$0$} &   \multicolumn{1}{c|}{$6223402$}& &$f(x,y) = x^2+xy+6y^2$,\\ 
 \multicolumn{1}{|c}{$3$}   & \multicolumn{1}{c|}{$1$} &     \multicolumn{1}{c|}{$4240799$}& &$D=-23$,\\
    \multicolumn{1}{|c}{}& \multicolumn{1}{c|}{$2$}&        \multicolumn{1}{c|}{$4239968$}      & & $\legendre{D}{q}=1$,\\
    \cline{1-3}
 &  &     &         &  $C(D) \cong \Z/3\Z$,\\
    &  &     &           &   $G(D)=trivial$,\\
    &  &     &           &   $\nu_H(q) = 0$.\\
    
    &  &     &           &   $r= 1$.\\
    \cline{5-5}
    
\end{tabular}
\caption{Values of $B_f(x;3,a)$, where $f(x,y)= x^2+xy+6y^2$. Notice how the bias towards $0$ seems to become more pronounced when we drop the squarefree restriction (compare with \Cref{cn3sf}).}
\label{cn3}
\end{table}

\begin{table}[H]
\centering
\begin{tabular}{|cc|c|cccc}
\cline{1-3}\cline{6-6}
$q$  & $a$  & $B_f(10^8,7,a)$  &&&\multicolumn{1}{|c|}{Additional Information}\\ \cline{1-3} \cline{6-6}
     & $0$  & $1745576$         &&& \multicolumn{1}{|c|}{$f(x,y) =x^2+xy+22y^2$,} \\
     & $1$  & $1254963$         &&& \multicolumn{1}{|c|}{$D=-87,$} \\
     & $2$  & $1254939$                 &&& \multicolumn{1}{|c|}{$\legendre{D}{q}=1,$} \\
     $7$ & $3$  & $1254519$                  &&& \multicolumn{1}{|c|}{$C(D)\cong \Z/6\Z,$} \\
     & $4$  & $1255006$                  &&& \multicolumn{1}{|c|}{$G(D)\cong \Z/2\Z$,} \\
 & $5$  & $1254481$                   &&&\multicolumn{1}{|c|}{$\nu_H(q)=0$.} \\\cline{6-6}
     & $6$  & $1254492$                 &&&\\\cline{1-3}
\end{tabular}
\caption{Values of $B_f(x;7,a)$, where $f(x,y)= x^2+xy+22y^2$. A bias still exists, though none of our theorems cover this case.}
\label{table conj sup1}
\end{table}

\begin{table}[H]
\centering
\begin{tabular}{cccc|c|}
\cline{1-3} \cline{5-5}
\multicolumn{1}{|c}{$q$} & \multicolumn{1}{c|}{$a$} &\multicolumn{1}{c|}{$B_f(10^8,3,a)$}   & &Additional Information\\\cline{1-3} \cline{5-5}
   \multicolumn{1}{|c}{} & \multicolumn{1}{c|}{$0$} &   \multicolumn{1}{c|}{$4246393$}& &$f(x,y) = 3x^2+xy+8y^2$,\\ 
 \multicolumn{1}{|c}{$3$}   & \multicolumn{1}{c|}{$1$} &     \multicolumn{1}{c|}{$3387811$}& &$D=-95$,\\
    \multicolumn{1}{|c}{}& \multicolumn{1}{c|}{$2$}&        \multicolumn{1}{c|}{$3387781$}      & & $\legendre{D}{q}=1$,\\
    \cline{1-3}
 &  &     &         &  $C(D) \cong \Z/8\Z$,\\
    &  &     &           &   $G(D) \cong \Z/2\Z$,\\
    &  &     &           &   $\nu_H(q) = 1$.\\
    \cline{5-5}
    
\end{tabular}
\caption{Values of $B_f(x;3,a)$, where $f(x,y)= 3x^2+xy+8y^2$. A bias still exists, though none of our theorems cover this case.}
\label{table conj sup 2}
\end{table}

\bibliographystyle{alpha}
\bibliography{main}

\begin{thebibliography}{DDNS21}

\bibitem[Ber12]{bernays1912ueber}
Paul Bernays.
\newblock {\em \"Uber die Darstellung von positiven, ganzen zahlen durch die primitiven, bin{\"a}ren Quadratischen formen einer nicht Quadratischen discriminante}.
\newblock Dieterich, 1912.

\bibitem[Cox13]{Cox}
David~A. Cox.
\newblock {\em Primes of the Form $x^2+ny^2$}.
\newblock Wiley, 2013.

\bibitem[DDNS21]{david_devin_nam_schlitt_2021}
Chantal David, Lucile Devin, Jungbae Nam, and Jeremy Schlitt.
\newblock Lemke {O}liver and {S}oundararajan bias for consecutive sums of two squares.
\newblock {\em Mathematische Annalen}, 384(3-4):1181–1242, 2021.

\bibitem[Fom98]{fomenko_1998}
O.~M. Fomenko.
\newblock Distribution of values of {F}ourier coefficients for modular forms of weight 1.
\newblock {\em Journal of Mathematical Sciences}, 89(1):1050–1071, Mar 1998.

\bibitem[Gol96]{golubeva1996representation}
Elena~Petrovna Golubeva.
\newblock On the representation of large integers by binary quadratic forms.
\newblock {\em Zapiski Nauchnykh Seminarov POMI}, 226:60--64, 1996.

\bibitem[Gol01]{golubeva2001exceptional}
Elena~Petrovna Golubeva.
\newblock Exceptional integers for binary quadratic forms.
\newblock {\em Journal of Mathematical Sciences}, 105(4):2148--2155, 2001.

\bibitem[Gor22]{gorodetsky_2022}
Ofir Gorodetsky.
\newblock Sums of two squares are strongly biased towards quadratic residues, Jun 2022.

\bibitem[GR21]{gorodetsky_rodgers_2021}
Ofir Gorodetsky and Brad Rodgers.
\newblock The variance of the number of sums of two squares in $\mathbb{F}[t]$ in short intervals.
\newblock {\em American Journal of Mathematics}, 143(6):1703–1745, 2021.

\bibitem[Inc]{MATH}
Wolfram~Research{,} Inc.
\newblock Mathematica, {V}ersion 13.3.
\newblock Champaign, IL, 2023.

\bibitem[Jam38]{Jamesquadforms}
R.~D. James.
\newblock The distribution of integers represented by quadratic forms.
\newblock {\em American Journal of Mathematics}, 60(3):737--744, 1938.

\bibitem[Kou19]{koukoulopoulos_2019}
Dimitris Koukoulopoulos.
\newblock {\em The distribution of the prime numbers}, volume 203 of {\em Graduate Studies in Mathematics}.
\newblock American mathematical society, 2019.

\bibitem[Lan09]{landau1909einteilung}
Edmund Landau.
\newblock {\em {\"U}ber die Einteilung der positiven ganzen Zahlen in vier Klassen nach der Mindestzahl der zu ihrer additiven Zusammensetzung erforderlichen Quadrate}.
\newblock 1909.

\bibitem[LOS16]{lemkeoliver_soundararajan_2016}
Robert~J. Lemke~{O}liver and Kannan Soundararajan.
\newblock Unexpected biases in the distribution of consecutive primes.
\newblock {\em Proceedings of the National Academy of Sciences}, 113(31), 2016.

\bibitem[Lut67]{Lut67}
Indar Luthar.
\newblock A generalization of a theorem of {L}andau.
\newblock {\em Acta Arith.}, 12(3):223--228, 1967.

\bibitem[MO06]{moree2006two}
Pieter Moree and Robert Osburn.
\newblock Two-dimensional lattices with few distances.
\newblock {\em arXiv preprint math/0604163}, 2006.

\bibitem[Pal43]{Pall1943TheDO}
Gordon Pall.
\newblock The distribution of integers represented by binary quadratic forms.
\newblock {\em Bulletin of the American Mathematical Society}, 49:447--449, 1943.

\bibitem[{The}23]{SAGE}
{The Sage Developers}.
\newblock {\em {S}ageMath, the {S}age {M}athematics {S}oftware {S}ystem ({V}ersion 9.3)}, 2023.
\newblock {\tt https://www.sagemath.org}.

\bibitem[Win13]{winckler2013th}
Bruno Winckler.
\newblock Théorème de {C}hebotarev effectif.
\newblock {\em arXiv preprint arXiv:1311.5715}, 2013.

\bibitem[Wir61]{wirsing1961asymptotische}
Eduard Wirsing.
\newblock Das asymptotische verhalten von summen {\"u}ber multiplikative funktionen.
\newblock {\em Mathematische Annalen}, 143(1):75--102, 1961.

\end{thebibliography}
\end{document}